\tikzset{->-/.style={decoration={
  markings,
  mark=at position .525 with {\arrow{Straight Barb}}},postaction={decorate}}}
\tikzset{->>-/.style={decoration={
  markings,
  mark=at position 0.5 with {\arrow{Straight Barb}},
  mark=at position 0.6 with {\arrow{Straight Barb}}},postaction={decorate}}}
\newcommand{\R}{\mathbb{R}}
\newcommand{\C}{\mathbb{C}} 
\newcommand{\N}{\mathbb{N}}
\newcommand{\Z}{{\mathbb Z}}
\newcommand{\Q}{{\mathbb Q}}
\newcommand{\Per}{{\bf P}}
\renewcommand{\phi}{\varphi}
\newcommand{\xn}{{\bf x}}
\newcommand{\yn}{{\bf y}}
\newcommand{\zn}{{\bf z}}
\theoremstyle{plain}
    \newtheorem{theorem}{Theorem}[section]
    \newtheorem{lemma}[theorem]{Lemma}
    \newtheorem{proposition}[theorem]{Proposition}
       \newtheorem{question}[theorem]{Question}
\theoremstyle{definition}
    \newtheorem{definition}[theorem]{Definition}
    \newtheorem{example}[theorem]{Example}
    \newtheorem{remark}[theorem]{Remark}
\theoremstyle{remark}
\begin{document}

\title[Smale spaces associated to flat manifolds]{Invariants for the Smale space associated to an expanding endomorphism of a flat manifold}

\author[Chaiser et al]{Rachel Chaiser}
\address{RC, MC, RJD, AK, JG, RH, LL, MR, AS,   Department of Mathematics,
University of Colorado Boulder
Campus Box 395,
Boulder, CO 80309-0395, USA }
\email{rachel.chaiser@colorado.edu, m.coateswelsh@gmail.com, robin.deeley@colorado.edu, annika.fahrner@colorado.edu, jamal.giornazi@colorado.edu, robi.huq@colorado.edu, levi.lorenzo@colorado.edu, maggie.reardon@colorado.edu, andrew.stocker@colorado.edu  }
\author[]{Maeve Coates-Welsh}
\author[]{Robin J. Deeley}
\author[]{Annika Farhner}
\author[]{Jamal Giornozi}
\author[]{Robi Huq}
\author[]{Levi Lorenzo}
\author[]{Jos\'e  Oyola-Cortes}
\address{JO,  Department of Mathematics
University of Puerto Rico, Rio Piedras Campus
17 University Ave. Ste 1701
San Juan PR, 00925-2537  }
\email{jose.oyola5@upr.edu}
\author[]{Maggie Reardon}
\author[]{Andrew M. Stocker}

\subjclass[2010]{46L80, 22A22}
\keywords{Flat manifolds, Smale spaces, groupoids, $K$-theory, homology}
\thanks{This work was partially supported by NSF Grant DMS 2000057 and by the CU Boulder Department of Mathematics in the context of its internal Research For Undergraduates program. JG was partially supported by a McNair scholarship and JO was partially supported by CU Boulder's Summer Multicultural Access to Research Training (SMART) program.}

\begin{abstract}
We study invariants associated to Smale spaces obtained from an expanding endomorphism on a (closed connected Riemannian) flat manifold. Specifically, the relevant invariants are the $K$-theory of the associated $C^*$-algebras and Putnam's homology theory for Smale spaces. The latter is isomorphic to the groupoid homology of the groupoids used to construct the $C^*$-algebras. 
\end{abstract}

\maketitle

\section*{Introduction}

A Smale space, $(X, \varphi)$, is a compact metric space, $X$, and a homeomorphism, $\varphi: X \rightarrow X$, that is uniformly hyperbolic; see Definition \ref{SmaSpaDef} for the precise definition. Associated to a Smale space are three \'etale groupoids: the stable, unstable, and homoclinic groupoids. Furthermore, using \cite{Ren}, there are C*-algebras associated with each of these groupoids. 

An important problem is the computation of invariants of a Smale space, often for ones constructed using the the groupoids/C*-algebras mentioned above. Three examples of such invariants are as follows:
\begin{enumerate}
\item the stable/unstable Putnam homology of $(X, \varphi)$ \cite{PutHom}, 
\item the homology of the relevant \'etale groupoids \cite{MR1752294}, and
\item the $K$-theory of the associated $C^*$-algebras.
\end{enumerate}

By \cite[Theorem 4.1 and Remark 4.4]{PV2} (also see \cite{PV1, PV3}) the Putnam homology and groupoid homology are isomorphic, so we need only compute two of three invariants in the previous list. In addition, it is worth noting that the stable and unstable groupoids depend on a choice of a finite set of periodic points; however, different choices lead to Morita equivalent groupoids and the above invariants are Morita invariant so they do not depend on the choice of periodic points. 

One natural way to construct a Smale space is as a solenoid. The process is as follows. Given a metric space, $Y$, and a continuous surjective map $g: Y \rightarrow Y$, one forms the space
\[
X:= \varprojlim (Y, g) = \{ (y_n)_{n\in \N} = (y_0, y_1, y_2, \ldots ) \: | \: g(y_{i+1})=y_i \hbox{ for each }i\ge0 \}
\]
and $\varphi: X \rightarrow X$ is defined via
\[
\varphi(x_0, x_1, x_2, \ldots ) = (g(x_0), g(x_1), g(x_2), \ldots) = (g(x_0), x_0, x_1, \ldots ).
\] 
The pair $(X, \varphi)$ is a dynamical system and one would like to know when it is a Smale space. In this context, Williams defined an important class of solenoids that are Smale spaces \cite{Wil} and Wieler provided the definitive result \cite{WiePhD, Wie}. Wieler gives conditions on $(Y, g)$ that ensure that $(X, \varphi)$ is a Smale space and likewise conditions on $(X, \varphi)$ that ensure that there exists $(Y, g)$ leading to $(X, \varphi)$ via the solenoid construction. The reader can see \cite[Theorems A and B]{Wie} for the precise result.

The goal of the present paper is to compute a number of invariants associated to a Smale space in the particular case when the Smale space is the solenoid associated to an expanding endomorphism of a flat manifold. These solenoids are the most well-behaved examples of Williams' solenoids. Here, by compute we mean that the relevant Smale space invariant is completely determined by the algebraic topology of $Y$ and $g$. Using these results we consider a number of explicit computations.

The main theoretical results of the present paper are the following two theorems (see the main body of the paper for further details on the notation used):

\begin{theorem}
Suppose that $G^s(P)$ is the stable groupoid obtained from an expanding endomorphism $g : Y \rightarrow Y$ where $Y$ is a flat manifold. Then 
\[  K_*(C^*(G^s(P))) \cong \lim ( K^*(Y), t_{{\rm K-theory}}) \hbox { and }  H_*(G^s(P)) \cong \lim ( H^*(Y), t_{{\rm cohomology}}) \]
where $t_{{\rm K-theory}}$ is the transfer map in $K$-theory associated to $g$ and $t_{{\rm cohomology}}$ is the transfer map in cohomology associated to $g$.
\end{theorem}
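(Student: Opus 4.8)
The plan is to reduce the computation of both invariants to a single structural description of the stable groupoid $G^s(P)$ in terms of the covering dynamics of $g$, and then to invoke continuity of $K$-theory and of groupoid homology under inductive limits. First I would use Wieler's theorem (as recalled above) to confirm that the expanding endomorphism $g$ of the flat manifold $Y$ satisfies Wieler's axioms, so that $(X,\varphi)$ with $X=\varprojlim(Y,g)$ is genuinely a Smale space, and I would record the resulting local product structure: since $g$ is a finite covering of the closed manifold $Y$, locally $X\cong U\times C$ with $U\subseteq Y$ open (the unstable, manifold direction) and $C$ a Cantor set of preimage choices (the stable direction). In particular the unit space of $G^s(P)$ is totally disconnected, so $G^s(P)$ is ample and Putnam's homology agrees with its groupoid homology by the cited isomorphism.

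Second, I would realize $C^*(G^s(P))$ as a stationary inductive limit. The homeomorphism $\varphi$ preserves stable equivalence and induces an automorphism of $G^s(P)$ that rescales the stable sets; this exhibits $G^s(P)$ as the increasing union of a nested sequence of elementary open subgroupoids (stable pairs witnessed within $n$ applications of $\varphi$), each of which is Morita equivalent to a groupoid built from a single copy of $Y$. Using Morita invariance here also disposes of the dependence on the choice of periodic set $P$. This would present
\[ C^*(G^s(P)) \cong \varinjlim\bigl( A \xrightarrow{\ \psi\ } A \xrightarrow{\ \psi\ } A \to \cdots \bigr), \]
with $A$ Morita equivalent to $C(Y)$ and $\psi$ the $*$-homomorphism induced by the self-covering.

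The crux is to identify the map induced by $\psi$ on each invariant with the transfer. On $K$-theory I expect $\psi_*$ to be the $K$-theoretic transfer $t_{\mathrm{K-theory}}\colon K^*(Y)\to K^*(Y)$ of $g$, and on groupoid homology the cohomological transfer $t_{\mathrm{cohomology}}\colon H^*(Y)\to H^*(Y)$. The appearance of the \emph{cohomology} of $Y$ (rather than its homology) and of the \emph{transfer} (rather than the pullback $g^*$) is forced by Poincar\'e duality along the unstable manifold leaves together with the ``sum over the finite fibre $g^{-1}(\cdot)$'' that the groupoid structure encodes; this is also what explains how the topology of the manifold $Y$ surfaces at all, given that the unit space of $G^s(P)$ is totally disconnected. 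This is the step I expect to be the main obstacle: one must track orientations and duality on the flat leaves carefully and verify that the fibrewise sum produced by passing from one stage of the inductive system to the next is exactly the Umkehr/Gysin map, not merely a map with the correct source and target. I would pin down the normalizations by checking the generators in the model case of a degree-$d$ expanding map of the circle, where the transfer is multiplication by $d$ on $K^0$ and the identity on $K^1$, reproducing the expected $\mathbb{Z}[1/d]$-type answer.

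Finally, with $\psi_*$ identified, I would conclude by continuity: $K_*$ commutes with inductive limits of $C^*$-algebras, giving $K_*(C^*(G^s(P)))\cong \varinjlim(K^*(Y),t_{\mathrm{K-theory}})$, and groupoid homology commutes with the corresponding increasing union of ample groupoids, giving $H_*(G^s(P))\cong \varinjlim(H^*(Y),t_{\mathrm{cohomology}})$. The two assertions thus run in complete parallel, differing only in the coefficient theory ($K$-theory versus ordinary cohomology) and in the continuity result invoked.
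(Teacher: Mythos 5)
Your overall skeleton --- write $G^s(P)$ as an increasing union of nested open subgroupoids, each Morita equivalent to $Y$, identify the connecting maps with the transfer of $g$, and conclude by continuity of $K$-theory and of groupoid homology under such limits --- is the same as the paper's (there the subgroupoids are $G_k(P)=\{(\xn,\yn) \mid g^k(x_0)=g^k(y_0)\}$, with the needed properties imported from the Wieler-solenoid literature). But there are two genuine problems. First, a factual error: the unit space of $G^s(P)$ is $X^u(P)$, the global unstable set of $P$, which by Proposition \ref{UniversalCoverProp} is the universal cover of $Y$ and homeomorphic to $\R^d$; it is connected, not totally disconnected, so $G^s(P)$ is \'etale but not ample (it is the \emph{unstable} groupoid $G^u(P)$, with unit space $X^s(P)$, that is ample). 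This mix-up is not cosmetic, because you lean on it to explain why $H^*(Y)$ and $K^*(Y)$ appear at all. The actual reason is direct and requires no duality: each $G_k(P)$ is the groupoid of the finite-to-one covering $g^k \circ \pi_0 : X^u(P) \rightarrow Y$, where $\pi_0$ is the universal covering map, hence is Morita equivalent to the trivial groupoid $Y$; the groupoid homology of a space is its cohomology, and $K_*(C(Y)) \cong K^*(Y)$. In particular no orientation or spin$^c$ hypothesis can be involved --- the theorem holds for the Klein bottle, which is non-orientable, and the paper's later counterexample to Putnam's question depends on exactly this.

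Second, and more seriously, the crux --- that the inclusion $G_k(P) \subseteq G_{k+1}(P)$ induces, through the Morita equivalences, the transfer map of $g$ --- is precisely the step you defer as ``the main obstacle,'' and the mechanism you propose (tracking orientations and Poincar\'e duality on the flat leaves to produce a Gysin map) is not the one that works. For a finite covering the transfer needs no orientation data; the difficulty is not constructing a wrong-way map but proving that the inclusion $*$-homomorphism $C^*(G_k(P)) \hookrightarrow C^*(G_{k+1}(P))$ intertwines with the transfer under the Morita equivalence bimodules. The paper isolates this as Theorem \ref{transferThmForStable} and proves it by exhibiting all three maps as tensor products with explicit Hilbert modules: the transfer as $E \mapsto E \otimes_{C(Y_0)} \mathcal{T}^{Y_0}_Y$ following \cite{MR3073917}, the Morita isomorphisms as tensoring with completions of $C_c(\tilde{Y})$, and $\iota_*$ as tensoring with $C^*(G(\pi))$, and then checking that the two composite modules are isomorphic as right $C(Y)$-modules. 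Your plan to ``pin down the normalizations'' on the circle cannot substitute for this: it confirms the answer in one example but proves nothing about the identification in general, and in particular says nothing about what the connecting map does on torsion, which is where the transfer genuinely differs from $g^*$ and where the paper's computations live.
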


\begin{theorem} 
Suppose that $G^u(P)$ is the unstable groupoid obtained from an expanding endomorphism $g : Y \rightarrow Y$ where $Y$ is a flat manifold. Then 
\[  K_*(C^*(G^u(P))) \cong \lim ( K_*(Y), t_{{\rm K-homology}}) \hbox { and }  H_*(G^u(P)) \cong \lim ( H_*(Y), t_{{\rm homology}}) \]
where $t_{{\rm K-homology}}$ is the transfer map in $K$-homology associated to $g$ and $t_{{\rm homology}}$ is the transfer map in homology associated to $g$.
\end{theorem}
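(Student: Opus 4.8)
The plan is to compute the two unstable invariants directly, in a manner parallel to the stable case of the preceding theorem but with every cohomological input replaced by its homological dual. Since the listed invariants are Morita invariant and independent of the chosen set $P$ of periodic points, I am free to replace $G^u(P)$ by any Morita-equivalent \'etale groupoid, and the first step is to produce a convenient one. Expanding endomorphisms of a flat manifold $Y = \R^n/\Gamma$ are, up to conjugacy, affine: $g$ lifts to an expanding affine map $\tilde g$ of $\R^n$ with $\tilde g \Gamma \tilde g^{-1} \subseteq \Gamma$ of finite index $d$. Writing $\Gamma_N = \tilde g^{N} \Gamma \tilde g^{-N}$ for the resulting decreasing chain of index-$d^N$ subgroups and $C = \varprojlim_N \Gamma/\Gamma_N$ for the associated profinite completion, the solenoid is $X \cong (\R^n \times C)/\Gamma$, with the unstable leaves the images of the slices $\R^n \times \{c\}$ and the transverse (stable) direction given by $C$. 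Restricting the unstable equivalence relation to a complete transversal modeled on $\{0\}\times C$ then exhibits a Morita equivalence $G^u(P) \sim \Gamma \ltimes C$, where $\Gamma$ acts on $C$ by left translation.

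With this model in hand, both computations become computations of the two natural invariants of $\Gamma \ltimes C$, each of which is manifestly an inductive limit indexed by $N$. On the $C^*$-level, $C^*(\Gamma \ltimes C) = C(C)\rtimes \Gamma = \varinjlim_N \bigl( C(\Gamma/\Gamma_N)\rtimes \Gamma \bigr)$, and Green's imprimitivity theorem identifies each stage as $C(\Gamma/\Gamma_N)\rtimes \Gamma \cong M_{d^{N}}\!\left(C^*_r(\Gamma_N)\right)$, which is stably isomorphic to $C^*_r(\Gamma_N)$. On the homological level I would use the identification of Putnam homology with groupoid homology \cite{PV2} together with the standard identification $H_*(\Gamma \ltimes C) \cong H_*\!\left(\Gamma; C(C,\Z)\right)$; since $C(C,\Z) = \varinjlim_N \Z[\Gamma/\Gamma_N]$, Shapiro's lemma gives $H_*(\Gamma \ltimes C) \cong \varinjlim_N H_*(\Gamma; \Z[\Gamma/\Gamma_N]) \cong \varinjlim_N H_*(\Gamma_N)$, with group homology commuting with the filtered colimit of coefficients so that no separate continuity theorem for groupoid homology is needed.

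It remains to identify the stages and connecting maps with the objects in the statement. Each $\Gamma_N$ is again a Bieberbach group, isomorphic to $\Gamma$ via conjugation by $\tilde g^{N}$, so that $Y_N := \R^n/\Gamma_N$ is a flat manifold diffeomorphic to $Y$; being crystallographic, $\Gamma_N$ is moreover amenable. Asphericity gives $H_*(\Gamma_N) = H_*(Y_N) \cong H_*(Y)$, while the Baum--Connes isomorphism (valid by amenability) gives $K_*(C^*_r(\Gamma_N)) \cong K_*(Y_N) \cong K_*(Y)$, the $K$-homology of $Y$. The inclusions $\Gamma_{N+1}\subseteq \Gamma_N$ correspond to the finite coverings $Y_{N+1}\to Y_N$, and under these identifications the connecting maps of both inductive systems are exactly the wrong-way (transfer) maps of those coverings, namely $t_{{\rm homology}}$ and $t_{{\rm K-homology}}$ associated to $g$. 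Applying continuity of $K_*$ under inductive limits of $C^*$-algebras on one side, and the colimit computation above on the other, yields
\[ K_*(C^*(G^u(P))) \cong \varinjlim\bigl(K_*(Y), t_{{\rm K-homology}}\bigr), \qquad H_*(G^u(P)) \cong \varinjlim\bigl(H_*(Y), t_{{\rm homology}}\bigr), \]
as required.

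I expect the main obstacle to be the verification in the last step that the connecting maps are \emph{exactly} the transfer maps, and not some companion map such as the induced map $g_*$ or its restriction. On the homology side this means tracking the pullback inclusions $\Z[\Gamma/\Gamma_N]\hookrightarrow \Z[\Gamma/\Gamma_{N+1}]$ through Shapiro's lemma and matching the outcome with the geometric transfer of the covering $Y_{N+1}\to Y_N$; on the $K$-theory side it additionally requires checking compatibility of this transfer with the Baum--Connes assembly isomorphism, so that the two transfer maps genuinely correspond under the comparison between $K$-homology and ordinary homology. A secondary, more bookkeeping-type difficulty is justifying the initial reduction $G^u(P)\sim \Gamma \ltimes C$ together with the description $X \cong (\R^n \times C)/\Gamma$, which rests on the affine normal form for expanding maps of flat manifolds and on Wieler's description \cite{Wie} of the local stable and unstable structure.
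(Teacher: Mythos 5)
Your architecture is the same as the paper's: reduce $G^u(P)$, up to Morita equivalence, to the transformation groupoid of the odometer action of $\Gamma=\pi_1(Y)$ on $C=\varprojlim_N \Gamma/\Gamma_N$ (your $\Gamma\ltimes C$ is the paper's $R_{{\rm orbit}}$; the two agree because the action is free), and then compute both invariants as stationary inductive limits whose stages are $K_*(Y)$ and $H_*(Y)$. Your homology half is essentially complete: the Shapiro's-lemma argument, including the fact that the coset-summing inclusions $\Z[\Gamma/\Gamma_N]\hookrightarrow \Z[\Gamma/\Gamma_{N+1}]$ become corestriction (transfer) maps $H_*(\Gamma_N)\rightarrow H_*(\Gamma_{N+1})$, is precisely the content of \cite[Proposition 2.4]{Scarparo}, which the paper cites at this step; combined with asphericity and the identification of the covering $Y_{N+1}\rightarrow Y_N$ with $g$ itself, that side is sound. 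Two differences of route are worth noting: your Morita equivalence rests on the affine normal form for expanding maps (Shub--Franks/Gromov rigidity) and the model $X\cong(\R^d\times C)/\Gamma$, a much heavier input than the paper's self-contained argument, which works with an arbitrary expanding endomorphism using only path lifting and the arclength contraction estimate (the paper also notes the equivalence follows from Nekrashevych's work \cite{MR2274718}); and where you rederive the odometer homology, the paper simply cites Scarparo.

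The genuine gap is exactly the point you flag and then defer: identifying the $K$-theoretic connecting maps with $t_{{\rm K-homology}}$. Green's imprimitivity plus Baum--Connes only tells you that $K_*(C^*(G^u(P)))$ is an inductive limit of copies of $K_*(Y)$ along the maps induced by the unital inclusions $C(\Omega_N)\rtimes\Gamma\hookrightarrow C(\Omega_{N+1})\rtimes\Gamma$ (where $\Omega_N=\Gamma/\Gamma_N$ in your notation), transported through two nontrivial isomorphisms; nothing in your argument rules out that these become, say, maps induced by $g$ rather than wrong-way maps, and "compatibility of the transfer with the assembly isomorphism" is the substance of the theorem, not a routine check. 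The paper supplies a concrete mechanism for it: Baum--Connes with coefficients identifies each stage with $KK^{\Gamma}_*(C_0(\R^d), C(\Omega_N))$; the equivariant Poincar\'e self-duality of $C(\Omega_N)$ (Kasparov duality for the proper $\Gamma$-space $\R^d$) converts the map $(g_N)_*$ induced on coefficients into the wrong-way map $(g_N)!$ on $KK^{\Gamma}_*(C_0(\R^d\times\Omega_N),\C)$; and descending along the homeomorphisms $(\R^d\times\Omega_N)/\Gamma\cong Y$ identifies $(g_N)!$ with the $K$-homology transfer of the covering $g$. Without this duality step (or an equivalent device) the $K$-theory half of your proof is incomplete at its crux, while the homology half stands.
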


The proof of the first of these theorems uses known results about the decomposition of the stable groupoid and hence the proof given here is quite short. The one thing missing from previous work is the precise description of the connecting maps in the inductive limits. The proof of the second theorem given here is more detailed. The result is obtained by constructing a Morita equivalence between $G^u(P)$ and the action groupoid of the odometer action associated to $(Y, g)$. In the recent preprint \cite{PV3} it is shown that this Morita equivalence follows from work of Nekrashevych \cite{MR2274718} (see \cite[Example 3.4]{PV3} for details). Nevertheless, we have included our proof as it is quite explicit, self-contained, and uses only basic covering space theory.  

Based on the structure of the previous theorem, it is useful, at least in part, to understand the transfer maps in $K$-theory, $K$-homology, and (co)homology. Our main result in this regard is the following theorem (see the main body of the paper for further details on the notation used):

\begin{theorem}
Suppose $Y$ is a flat manifold. Then there exists an expanding endomorphism $g: Y \rightarrow Y$ such that the transfer map on (co)homology is an isomorphism on the torsion subgroups. In particular, for this $(Y, g)$, we have that
\[
T(H_*(Y)) \cong T(H_*(G^u(P))) \hbox{ and } T(H^*(Y)) \cong T(H_* (G^s(P)))
\]
where
\begin{enumerate}
\item $(X, \varphi)$ is the Smale space associated to $(Y,g)$ and
\item $T(G)$ denotes the torsion subgroup of an abelian group $G$.
\end{enumerate}
\end{theorem}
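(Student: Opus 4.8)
The plan is to produce the endomorphism by scaling on the universal cover and to control the transfer via the Lyndon--Hochschild--Serre spectral sequence of the Bieberbach group. By Bieberbach's theorems I write $Y = \R^n/\Gamma$, where $\Gamma$ is a torsion-free crystallographic group sitting in a short exact sequence $0 \to L \to \Gamma \to F \to 1$ with $L \cong \Z^n$ the translation lattice and $F$ the finite holonomy group. Since $Y$ is aspherical with $\pi_1(Y) = \Gamma$, we have $H^*(Y;\Z) \cong H^*(\Gamma;\Z)$ and $H_*(Y;\Z) \cong H_*(\Gamma;\Z)$. The first point I would record is that the torsion subgroups $T(H^*(Y))$ and $T(H_*(Y))$ are annihilated by $|F|$: restricting to the finite-index free-(co)homology subgroup $L$ and applying the transfer gives $\mathrm{cor}\circ\mathrm{res} = |F|\cdot\mathrm{id}$, and since $H^*(L;\Z)$, $H_*(L;\Z)$ are torsion-free, every torsion class restricts to $0$ and is therefore killed by $|F|$.

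Next I would construct $g$. Choose an integer $m \geq 2$ with $m \equiv 1 \pmod{|F|}$. The extension class $[\Gamma] \in H^2(F;L)$ is $|F|$-torsion, so $m\cdot[\Gamma] = [\Gamma]$, which is exactly the obstruction-theoretic statement that the scaling $x \mapsto mx$ on $\R^n$ descends to a self-map $g : Y \to Y$ (cf.\ Epstein--Shub on expanding endomorphisms of flat manifolds). This $g$ is expanding with $\deg(g) = m^n$, and the induced endomorphism $g_{\#}$ of $\Gamma$ is $(R,t) \mapsto (R, mt)$, i.e.\ the identity on $F$ and multiplication by $m$ on $L$. The decisive feature is that $\gcd(m,|F|) = 1$.

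I would then feed $g_{\#}$, as a morphism of extensions, into the cohomology spectral sequence $E_2^{p,q} = H^p\bigl(F; \wedge^q L^{*}\bigr) \Rightarrow H^{p+q}(\Gamma;\Z)$ and its homological analogue with coefficients $\wedge^q L$. Because $g_{\#}$ is the identity on $F$ and multiplication by $m$ on $L$, it acts on the coefficient module $\wedge^q L^{*}$ (resp.\ $\wedge^q L$) as the scalar $m^q$, which is $F$-equivariant; hence it acts on each $E_2^{p,q}$, and by naturality of the differentials on each $E_r^{p,q}$ and on $E_\infty^{p,q}$, as multiplication by the scalar $m^q$. All torsion in the associated graded is $|F|$-torsion (by the first paragraph), and on any $|F|$-torsion group multiplication by $m^q$ is invertible since $\gcd(m,|F|)=1$. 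Thus $g^{*}$ (resp.\ $g_{*}$) induces isomorphisms on the graded pieces of $T(H^*(Y))$ (resp.\ $T(H_*(Y))$); as these torsion groups are finite and the maps preserve the (finite) filtration, a short induction shows $g^{*}$ and $g_{*}$ are isomorphisms on $T(H^*(Y))$ and $T(H_*(Y))$ themselves.

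Finally I would pass from $g$ to the transfer $t$ and then to the limit. Since $g$ is a covering of degree $m^n$, the covering transfer satisfies $t \circ g^{*} = m^n\cdot\mathrm{id}$ on $H^*(Y)$ and $g_{*}\circ t = m^n\cdot\mathrm{id}$ on $H_*(Y)$; on the $|F|$-torsion group $T$ the map $m^n$ is invertible, so $t|_{T} = m^n\,(g^{*}|_T)^{-1}$ (resp.\ $t|_T = (g_{*}|_T)^{-1}\,m^n$) is an isomorphism. This is the asserted statement that the transfer is an isomorphism on the torsion subgroups. Since the torsion functor commutes with filtered colimits and each connecting map is an isomorphism on torsion, $T\bigl(\varinjlim(H^*(Y),t)\bigr) \cong \varinjlim T(H^*(Y)) \cong T(H^*(Y))$, and likewise in homology; combining this with the stable and unstable limit theorems above, which identify $H_*(G^s(P))$ with $\varinjlim(H^*(Y),t)$ and $H_*(G^u(P))$ with $\varinjlim(H_*(Y),t)$, yields the two displayed isomorphisms. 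I expect the \textbf{main obstacle} to be the second and third steps together: rigorously exhibiting $g$ and identifying $g_{\#}$ as multiplication by $m$ on $L$ and the identity on $F$, and then verifying that the spectral-sequence action is \emph{literally} the scalar $m^q$, so that ``isomorphism on the graded pieces'' upgrades to ``isomorphism on $T$'' without having to resolve any extension problems.
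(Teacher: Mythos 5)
Your route is, at its core, the same as the paper's: both arguments rest on an Epstein--Shub expanding endomorphism of degree $m^n$ with $m\equiv 1 \pmod{|F|}$ (the paper takes $m=|F|+1$, so $n$-fold with $n=(|F|+1)^d$), on the composition identities $t_{\rm cohomology}\circ g^* = \deg(g)\cdot\mathrm{id}$ and $g_*\circ t_{\rm homology} = \deg(g)\cdot\mathrm{id}$ from Proposition~\ref{propTransferProp}, on the fact that the torsion of $H_*(Y)$ and $H^*(Y)$ is finite and annihilated by $|F|$ (Proposition~\ref{flatManTorsionHomology}), and on pushing the resulting torsion isomorphism through the stationary inductive limits of Theorems~\ref{KHstable} and~\ref{KHunstable}. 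The genuine difference is your middle step: you run the Lyndon--Hochschild--Serre spectral sequence to prove that $g^*$ and $g_*$ are themselves isomorphisms on torsion, and then invert them to identify $t|_T$. This step --- the one you flag as the main obstacle --- is unnecessary. Since $m^n\equiv 1\pmod{|F|}$, multiplication by $m^n$ is the identity on any group annihilated by $|F|$; hence $t\circ g^* = m^n\cdot\mathrm{id}$ already shows that $t|_T$ is a surjective self-map of the finite group $T(H^*(Y))$, hence bijective, and dually $g_*\circ t = m^n\cdot\mathrm{id}$ shows $t|_T$ is injective on $T(H_*(Y))$, hence bijective. No separate control of $g^*|_T$ or $g_*|_T$ is needed, and this shortcut is exactly the paper's proof.

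Within the spectral-sequence detour there is also a gap in justification. You claim that ``all torsion in the associated graded is $|F|$-torsion (by the first paragraph).'' The first paragraph bounds the torsion of the \emph{total} groups, and that does not formally bound the torsion of the graded pieces of a filtration: the filtration $2\Z\subset\Z$ of the torsion-free group $\Z$ has graded piece $\Z/2\Z$. What you actually need is the structure of the $E_2$-page: for $p>0$ the groups $H^p(F;\wedge^q L^*)$ and $H_p(F;\wedge^q L)$ are annihilated by $|F|$, and this persists to all subquotients; on the edge, $E_\infty^{0,q}$ in cohomology is a \emph{subgroup} of the torsion-free group $(\wedge^q L^*)^F$ (no differentials enter the $p=0$ column), while in homology the torsion of the coinvariants $(\wedge^q L)_F$ lies in the kernel of the norm map and is thus $|F|$-torsion, and the differentials entering the $p=0$ column have $|F|$-torsion image. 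With these corrections your filtration induction does go through (similarly, the literal scaling $x\mapsto mx$ need not normalize $\Gamma$; one realizes the morphism of extensions by an affine map with linear part $m\cdot\mathrm{id}$, which is what Epstein--Shub do). But the simplest repair is to delete the spectral-sequence step entirely and argue as above.
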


In other words, any torsion groups that appear in the (co)homology of a flat manifold also appear in the torsion of Putnam's homology. Since there are quite a few results on the (co)homology of flat manifolds this leads to new information about the possible torsion subgroups that can appear in Putnam's homology.

A number of explicit computations are considered here. Notably, we give an explicit counterexample to Question 8.3.2 in \cite{PutHom} and give some positive results related to this question. In recent independent work, Proietti and Yamashita have reformulated Question 8.3.2 to take orientation into account and have given a positive answer to this reformulation, see \cite{PV3} for details. Their result applies to the homology of the unstable groupoid of a solenoid, but does not apply to the homology of the stable groupoid of solenoids (which are discussed in the present paper). Their general result is consistent with our results and we would be remiss to not mention that their beautiful result is more general than ours in the context of the homology of the unstable groupoid. It applies in particular to any Williams' solenoid, not just ones associated with flat manifolds. 

The structure of the paper is as follows. The preliminaries are discussed in Section \ref{secPrelim}. This includes a discussion of Smale spaces in general, those associated to flat manifolds via the solenoid construction, and some basic results about the transfer map. Section \ref{KtheoryHomologySec} contains our main structural results, which realized the various invariants discussed above as inductive limits. Explicit computations are discussed in Section \ref{SecComputations}. The main highlights are the torsion result mentioned in the previous paragraph, examples in low dimension, and a detailed application of the general results to Hantzsche--Wendt manifolds (a special class of flat manifolds). Finally, it is worth mentioning that (while we have restricted to flat manifolds) our results apply to the general case of expanding endomorphisms consider in \cite{ShubExp}. The reason for restricting to flat manifolds is based on the main result of \cite{EpSh}, which states that each flat manifold admits at least one expanding endomorphism.

A short summary of the notation used in the present paper is as follows:
\begin{enumerate}
\item If $H$ is a group and $\alpha: H \rightarrow H$, then $\lim (H, \alpha)$ denotes the inductive limit group associated with the stationary inductive limit:
\[ H \rightarrow H \rightarrow H \rightarrow \ldots \]
\item $Y$ denotes a closed, connected, Riemannian flat manifold of dimension $d$. We will refer to $Y$ as simply a flat manifold. 
\item $g: Y \rightarrow Y$ is a expanding endomorphism in the sense of Shub, see page 176 of \cite{ShubExp} or Section \ref{ExpEndSec}.
\item $(X, \varphi)$ is a Smale space. Typically, it is the Smale space associated $(Y, g)$, which is constructed via an inverse limit, see Section \ref{ExpEndSec}.
\item $y_0$ is a particular fixed point of $g$ and $P=(y_0,y_0, \ldots)$ is a fixed point of $\varphi$.
\item $G^s(P)$ and $G^u(P)$ respectively denote the stable and unstable groupoid associated to $(X, \varphi)$ with respect to the set $\{ P \}$.
\item If $G$ is an amenable groupoid, then $C^*(G)$ denotes its $C^*$-algebra; all groupoids in the present paper are amenable.
\item $H^s_*(X, \varphi)$ and $H^u_*(X, \varphi)$ denote Putnam's homology for $(X, \varphi)$.
\item If $Z$ is a compact Hausdorff space, then $H_*(Z)$ and $H^*(Z)$ respectively denote its Cech homology and cohomology.
\item If $G$ is an \'etale groupoid, then $H_*(G)$ denotes its groupoid homology, see \cite{MR1752294}. 
\item If $Y$ is a space viewed as a trivial groupoid, then the groupoid homology of $Y$ is natural isomorphic to the cohomology of $Y$, see \cite[Section 3.5]{MR1752294} for details.
\end{enumerate}
\section{Preliminaries} \label{secPrelim}

\subsection{Smale spaces} \label{SmaleSec}
\begin{definition} \label{SmaSpaDef}
A Smale space is a metric space $(X, d)$ along with a homeomorphism $\varphi: X\rightarrow X$ with the following additional structure: there exists global constants $\epsilon_X>0$ and $0< \lambda < 1$ and a continuous map, called the bracket map, 
\[
[ \ \cdot \  , \ \cdot \ ] :\{(x,y) \in X \times X : d(x,y) \leq \epsilon_X\}\to X
\]
such that the following axioms hold
\begin{itemize}
\item[B1] $\left[ x, x \right] = x$;
\item[B2] $\left[x,[y, z] \right] = [x, z]$ when both sides are defined;
\item[B3] $\left[[x, y], z \right] = [x,z]$ when both sides are defined;
\item[B4] $\varphi[x, y] = [ \varphi(x), \varphi(y)]$ when both sides are defined;
\item[C1] For $x,y \in X$ such that $[x,y]=y$, $d(\varphi(x),\varphi(y)) \leq \lambda d(x,y)$;
\item[C2] For $x,y \in X$ such that $[x,y]=x$, $d(\varphi^{-1}(x),\varphi^{-1}(y)) \leq \lambda d(x,y)$.
\end{itemize}
We denote a Smale space simply by $(X,\varphi)$.
\end{definition}

An introduction to Smale spaces can be found in \cite{Put}. Throughout we assume that $X$ is an infinite set. The Smale spaces we consider here will be of a special form. They will be solenoids, and we will discuss this in detail in the next section. Before doing so, a few general facts will be discussed.

\begin{definition}
Suppose $(X, \varphi)$ is a Smale space and $x$, $y$ are in $X$. Then we write $x \sim_s y$ (respectively, $x\sim_u y$) if $\lim_{n \rightarrow \infty} d(\varphi^n(x), \varphi^n(y)) =0$ (respectively, $\lim_{n\rightarrow \infty}d(\varphi^{-n}(x) , \varphi^{-n}(y))=0$). The $s$ and $u$ stand for stable and unstable respectively.
\end{definition}

Given $x\in X$, the global stable and unstable set of $x$ are defined as follows:
\[
X^s(x) = \{ y \in X \: | \: y \sim_s x \} \hbox{ and } X^u(x)=\{ y \in X \: | \: y \sim_u x \}.
\]
Given, $0< \epsilon \le \epsilon_X$, the local stable and unstable set of a point $x \in X$ are defined as follows:
\begin{align}
X^s(x, \epsilon) & = \{ y \in X \: | \: [x, y ]= y \hbox{ and }d(x,y)< \epsilon \} \hbox{ and } \\
X^u(x, \epsilon) & = \{ y \in X \: | \: [y, x]= y \hbox{ and } d(x,y)< \epsilon \}.
\end{align} 
An important fact relating the local and global sets is the following:
\[ \displaystyle X^s(x) = \bigcup_{n\in \N} \varphi^{-n}(X^s(\varphi^n(x), \epsilon)) \hbox{ and }\displaystyle X^u(x) = \bigcup_{n\in \N} \varphi^n(X^u(\varphi^{-n}(x), \epsilon)). \]
It is worth noting that these unions are nested, that is,
\[ X^s(x, \epsilon) \subseteq \varphi^{-1}(X^s(\varphi(x), \epsilon) \subseteq \varphi^{-2}(X^s(\varphi^2(x), \epsilon) \subseteq \ldots
\]
and likewise in the decomposition of $X^u(x)$. The topologies on the local stable and local unstable sets are defined to be the subspace topology. While the topologies on $X^s(x)$ and $X^u(x)$ are defined using these decompositions, see  \cite[Theorem 2.10]{Kil} for details. With these topologies, $X^s(x)$ and $X^u(x)$ are locally compact and Hausdorff.

\subsection{Groupoids and C$^*$-algebras associated to Smale spaces} \label{subSecGroupoids}
The construction of the groupoids associated to a Smale space is considered in this section. We will follow \cite{PutSpi} for it. To begin, fix a finite $\varphi$-invariant set of periodic points of $\varphi$, which we denote by $\Per$. If $(X, \varphi)$ has a fixed point (which will be the case for the solenoids we consider) one can take $\Per$ to be the set containing just the fixed point. Define
\[
X^u(\Per):=\{ x \in X \: | \: x \sim_u p \hbox{ for some }p \in \Per \}
\]
and
\[
G^s(\Per) := \{ (x, y) \in X^u(\Per) \times X^u(\Per) \: | \: x \sim_s y \}.
\] 
Notice if $\Per=\{ P \}$ where $P$ is a fixed point of $(X, \varphi)$, then 
\[
X^u(\Per)=X^u(P).
\]
There is a topology, see \cite{PutSpi}, turning $G^s(\Per)$ into an \'etale groupoid, which is amenable. The groupoid $C^*$-algebra associated to $G^s(\Per)$ is denoted by $C^*(G^s(\Per))$ (see \cite{Ren} for the construction). In a similar way there is the construction of the unstable groupoid and its $C^*$-algebra, which are denoted by $G^u(\Per)$ and $C^*(G^u(\Per))$.

We will only need to consider the topology on $G^s(\Per)$ (and likewise for $G^u(\Per)$) in detail in Section \ref{KtheoryHomologySec}. A basic neighborhood for this topology is given as follows. As inputs, there are $x\in X^u(\Per)$, $y\in X^u(\Per)$, $N\in \N$, and $\delta>0$ where
\begin{enumerate}
\item $y \sim_s x$,
\item $\varphi^N(x) \in X^s(\varphi^N(y), \epsilon_X)$, and
\item $\varphi^N(X^u(x, \delta)) \subseteq X^u(\varphi^N(x), \epsilon_X)$.
\end{enumerate} 
The basic neighborhood associated to these inputs is the set
\[
V = \{ (h(z), z) \mid z \in X^u(x, \delta) \}
\]
where $h: X^u(x, \delta) \rightarrow X^u(y, \epsilon_X)$ is defined via
\[
z \mapsto \varphi^{-N} [ \varphi^N(z), \varphi^N(y)].
\]
For more on this topology and the analogous one on $G^u(\Per)$, see any of \cite{PutSpi}, \cite{Kil}, \cite{MR4138909}.

The $K$-theory of these $C^*$-algebras is denoted respectively by $K_*(C^*(G^s(\Per)))$ and $K_*(C^*(G^u(\Per)))$. These $K$-theory groups are important invariants of the Smale space and computing them for a particular class of examples is one of the main goals of this paper. Another important invariant is the homology of the groupoids $G^s(\Per)$ and $G^u(\Per)$; the homology of an \'etale groupoid is defined in \cite{MR1752294}. The other main goal of the present paper is computing this homology.
 
\subsection{Expanding endomorphisms on flat manifolds} \label{ExpEndSec}
A flat manifold refers to a closed, connected, Riemannian flat manifold. Throughout, $Y$ is a flat manifold of dimension $d$. Examples of flat manifolds include the circle, the torus and the Klein bottle; see \cite{MR862114} for more details and many more examples (see in particular page 41 of \cite{MR862114}). A fundamental property of flat manifolds is the following: the fundamental group of $Y$, $\pi_1(Y)$, is torsion-free and fits within the following short exact sequence
\[ 0 \rightarrow \Z^d \rightarrow \pi_1(Y) \rightarrow F \rightarrow 0 \]
where $\Z^d$ is maximal abelian and $F$ is a finite group, which is called the holonomy.

The following result is well-known, see for example \cite[Lemma 2.7]{MR2581917} for more details.
\begin{proposition} \label{flatManTorsionHomology}
If $x \in T(H_*(Y))$ or $T(H^*(Y))$, then the order of $x$ divides $|F|$. In particular, for any $k \in \N$ and $x \in T(H_*(Y))$ or $T(H^*(Y))$, $(|F|+1)^k x =x$.
\end{proposition}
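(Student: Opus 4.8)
The plan is to reduce the statement to a transfer argument for a finite covering of $Y$ by a torus. First I would use that $Y$ is flat: its universal cover is $\R^d$, so writing $Y = \R^d/\Gamma$ with $\Gamma := \pi_1(Y)$ a torsion-free Bieberbach group, the maximal abelian subgroup $\Z^d \trianglelefteq \Gamma$ produces the intermediate cover $\T^d = \R^d/\Z^d$. Because $\Gamma$ is torsion-free, the holonomy $F = \Gamma/\Z^d$ acts freely on $\T^d$, so the induced map $\pi \colon \T^d \to Y$ is an $|F|$-fold covering. Since $Y$ and $\T^d$ are closed manifolds, and hence compact absolute neighborhood retracts, their \v{C}ech (co)homology agrees with singular (co)homology, so it suffices to argue with the latter.

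Next I would record the two standard ingredients. On one hand, the torus has torsion-free (co)homology: $H_*(\T^d)$ and $H^*(\T^d)$ are both isomorphic to the exterior algebra $\Lambda^*(\Z^d)$, and in particular are free abelian. On the other hand, for the finite covering $\pi$ there are transfer maps compatible with the degree: in cohomology a map $\tau \colon H^*(\T^d) \to H^*(Y)$ with $\tau \circ \pi^* = |F| \cdot \id$, and in homology a map $\sigma \colon H_*(Y) \to H_*(\T^d)$ with $\pi_* \circ \sigma = |F| \cdot \id$.

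The argument then runs as follows. Let $x$ be a torsion element of $H^*(Y)$. Then $\pi^*(x)$ is a torsion element of the free abelian group $H^*(\T^d)$, hence $\pi^*(x) = 0$, and therefore $|F| \cdot x = \tau(\pi^*(x)) = 0$. The homology case is identical: for torsion $x \in H_*(Y)$ one applies $\sigma$ to land in the torsion-free group $H_*(\T^d)$, forcing $\sigma(x)=0$ and hence $|F|\cdot x = \pi_*(\sigma(x)) = 0$. Thus in both cases the order of $x$ divides $|F|$, which is the first assertion. For the ``in particular'' statement, $|F| \cdot x = 0$ gives $(|F|+1)x = |F| \cdot x + x = x$, and a trivial induction on $k$ yields $(|F|+1)^k x = x$ for every $k \in \N$.

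The only points needing care are the two quoted facts: the torsion-freeness of the (co)homology of the torus, which is immediate since $\T^d$ is a $d$-fold product of circles, and the identity $\tau \circ \pi^* = |F| \cdot \id$ (respectively $\pi_* \circ \sigma = |F| \cdot \id$) for the covering transfer, which is standard for any finite-sheeted covering. Given these, I expect no substantive obstacle; the entire content is that a torsion class must vanish upon restriction to the torsion-free torus, and the transfer then pushes this vanishing back to multiplication by the index $|F|$ on $Y$.
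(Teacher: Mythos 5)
Your proof is correct. Note that the paper itself offers no proof of this proposition: it is stated as well-known with a reference to \cite[Lemma 2.7]{MR2581917}, so there is no in-paper argument to compare against. What you have written is precisely the standard argument underlying that citation: pass to the $|F|$-fold torus cover $\T^d = \R^d/\Z^d \to Y$ coming from the translation lattice of the Bieberbach group, use that $H_*(\T^d)$ and $H^*(\T^d)$ are free abelian, and let the covering transfer (equivalently, in the group-(co)homology formulation of the cited reference, the corestriction--restriction identity $\mathrm{cor}\circ\mathrm{res} = [\Gamma:\Z^d]\cdot\mathrm{id}$ for the finite-index subgroup $\Z^d \le \pi_1(Y)$) push the vanishing of the restricted torsion class back to the statement that $|F|$ annihilates torsion in $H_*(Y)$ and $H^*(Y)$. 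Your reduction of \v{C}ech to singular (co)homology is legitimate since $Y$ and $\T^d$ are closed manifolds, and the ``in particular'' step $(|F|+1)^k x = x$ is immediate once $|F|x=0$. In short: correct, complete, and self-contained where the paper merely cites the literature.
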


Let $g: Y \rightarrow Y$ be an expanding endomorphism. That is (see page 176 of \cite{ShubExp}) there exists $C>0$ and $\lambda>1$ such that $|| Tg^k v || \ge C\lambda^k ||v ||$ for each $v \in TY$ and strictly positive integer $k$. Here $|| \cdot ||$ denotes a fixed Riemannian metric, but it is worth noting that being expanding is independent of the choice of metric (although the particular constants $C$ and $\lambda$ do depend on the metric). Furthermore, by \cite{EpSh}, for any flat manifold $Y$ there exists at least one expanding endomorphism on $Y$. By \cite[Proposition 3]{ShubExp}, $g$ is a covering map and, since $Y$ is compact, $g$ is an $n$-fold cover for some $n \ge 2$.

A few examples might be useful for the reader.

\begin{example}
Let $S^1$ denote the unit circle in the complex number and $n\ge 2$ be an integer. Then $g: S^1 \rightarrow S^1$ defined via $z \mapsto z^n$ is an expanding endomorphism.
\end{example}
\begin{example}
The $9$-fold cover determined by the following diagrams is an expanding endomorphism of the Klein bottle:
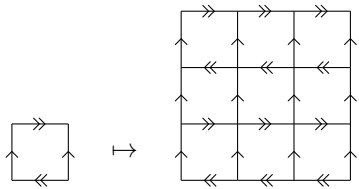
\begin{figure}[h]
\begin{tikzpicture}[scale=0.75]
\draw[->-] (0,0)--(0,1);
\draw[->-] (1,0)--(1,1);
\draw[->>-] (1,0)--(0,0);
\draw[->>-] (0,1)--(1,1);

\node at (2,1/2) {$\mapsto$};

\draw[->-] (3,0)--(3,1); \draw[->-] (3,1)--(3,2); \draw[->-] (3,2)--(3,3);
\draw[->-] (6,0)--(6,1); \draw[->-] (6,1)--(6,2); \draw[->-] (6,2)--(6,3);
\draw[->>-] (3,3)--(4,3); \draw[->>-] (4,3)--(5,3); \draw[->>-] (5,3)--(6,3);
\draw[->>-] (6,0)--(5,0); \draw[->>-] (5,0)--(4,0); \draw[->>-] (4,0)--(3,0);

\draw[->-] (4,0)--(4,1); \draw[->-] (4,1)--(4,2); \draw[->-] (4,2)--(4,3);
\draw[->-] (5,0)--(5,1); \draw[->-] (5,1)--(5,2); \draw[->-] (5,2)--(5,3);

\draw[->>-] (3,1)--(4,1); \draw[->>-] (4,1)--(5,1); \draw[->>-] (5,1)--(6,1);
\draw[->>-] (6,2)--(5,2); \draw[->>-] (5,2)--(4,2); \draw[->>-] (4,2)--(3,2);
\end{tikzpicture}
\caption{A nine fold self-cover of the Klein bottle}
\label{dia9Klein}
\end{figure}



\end{example}

Returning to the general case, associated to $(Y, g)$ there is a Smale space, see \cite{ShubExp, Put, Wil, Wie}. The construction is as follows. Let 
\[
X:= \varprojlim (Y, g) = \{ (y_n)_{n\in \N} = (y_0, y_1, y_2, \ldots ) \: | \: g(y_{i+1})=y_i \hbox{ for each }i\ge0 \}
\]
and $\varphi: X \rightarrow X$ be defined via
\[
\varphi(x_0, x_1, x_2, \ldots ) = (g(x_0), g(x_1), g(x_2), \ldots) = (g(x_0), x_0, x_1, \ldots ).
\] 
There is some flexibility in the specific metric on $X$, see \cite{WiePhD, Wie} for a particular choice. The topology is the subspace topology obtained by viewing $X$ as a subset of a countable product of copies of $Y$ with the product topology.

By \cite[Theorem 1 and Lemma 3]{ShubExp}, $g$ has a (unique) fixed point that lifts to a fixed point in the universal cover of $Y$. It will be denoted by $y_0$. We will use this as our based point, so $\pi_1(Y)$ denotes $\pi_1(Y, y_0)$. The fixed point of $\varphi$, given by $(y_0, y_0, y_0, \ldots)$, will be used as the finite $\varphi$-invariant set of periodic points of $\varphi$. That is, we take $\Per$ to be $\{ P \}$ where $P=(y_0, y_0, \ldots)$ in the construction of the groupoid $C^*$-algebras associated to $(X, \varphi)$. A summary of the situation is as follows:

\begin{definition} \label{solenoidGroupoidDefn}
The Smale space associated to $(Y, g)$ is defined to be the Smale space $(X, \varphi)$ discussed in the previous paragraphs. The stable (respectively unstable) groupoid associated to $(X, \varphi)$ is denoted by $G^s(P)$ (respectively $G^u(P)$) where $P=(y_0, y_0, \ldots)$ and $y_0$ is the fixed point discussed above.
\end{definition}

\begin{proposition} \label{UniversalCoverProp}
Suppose $(Y, g)$, $(X, \varphi)$, and $P$ are as in the previous paragraphs. Then the map $X^u(P) \rightarrow Y$ defined via
\[
(z_0, z_1, z_2, \ldots ) \mapsto z_0
\]
is the universal cover of $Y$.
\end{proposition}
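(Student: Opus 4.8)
The plan is to produce an explicit homeomorphism from the universal cover of $Y$ onto $X^u(P)$ that intertwines the zeroth-coordinate projection $q\colon X^u(P)\to Y$, $q(z_0,z_1,\dots)=z_0$, with the universal covering map, thereby identifying $q$ with the universal cover. Write $\pi\colon\widetilde Y\to Y$ for the universal cover; since $Y$ is flat, $\widetilde Y\cong\R^d$ and in particular is simply connected, and let $\widetilde y_0$ be the lift of $y_0$ furnished by the hypothesis that $y_0$ lifts to a fixed point upstairs. Because $g$ is a covering map, $g\circ\pi$ is again a universal covering of $Y$, so the lifting criterion produces a unique $\tilde g\colon\widetilde Y\to\widetilde Y$ with $\pi\circ\tilde g=g\circ\pi$ and $\tilde g(\widetilde y_0)=\widetilde y_0$; as a morphism between two universal covers of $Y$ it is a homeomorphism. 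Since $g$ is expanding, $\tilde g$ is expanding for the pulled-back metric on $\widetilde Y$, so $\tilde g^{-1}$ is eventually a contraction with unique fixed point $\widetilde y_0$, and $\tilde g^{-n}(\widetilde z)\to\widetilde y_0$ for every $\widetilde z$.

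I then define $\Phi\colon\widetilde Y\to X$ by $\Phi(\widetilde z)=\big(\pi(\tilde g^{-i}\widetilde z)\big)_{i\in\N}$. The relation $\pi\circ\tilde g=g\circ\pi$ shows $\Phi(\widetilde z)\in X$ and that $q\circ\Phi=\pi$, while $\Phi(\widetilde y_0)=P$ and a direct check gives the equivariance $\Phi\circ\tilde g=\varphi\circ\Phi$. First I would verify $\Phi(\widetilde Y)\subseteq X^u(P)$: the coordinates of $\varphi^{-n}\Phi(\widetilde z)=\Phi(\tilde g^{-n}\widetilde z)$ are $\pi(\tilde g^{-n-i}\widetilde z)$, and $\tilde g^{-m}\widetilde z\to\widetilde y_0$ forces these tails to converge to $P$, so $\Phi(\widetilde z)\sim_u P$. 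For injectivity, fix an evenly covered neighbourhood $U$ of $y_0$ with distinguished sheet $\widetilde U\ni\widetilde y_0$: if $\Phi(\widetilde z)=\Phi(\widetilde w)$ then $\pi(\tilde g^{-n}\widetilde z)=\pi(\tilde g^{-n}\widetilde w)$ for all $n$, and for large $n$ both $\tilde g^{-n}\widetilde z,\tilde g^{-n}\widetilde w$ lie in $\widetilde U$, where $\pi$ is injective, forcing $\tilde g^{-n}\widetilde z=\tilde g^{-n}\widetilde w$ and hence $\widetilde z=\widetilde w$.

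For surjectivity onto $X^u(P)$, take $x=(z_i)\in X^u(P)$; from $\varphi^{-n}x\to P$ one reads off $z_i\to y_0$, so for large $i$ we have $z_i\in U$ with lift $\widetilde z_i\in\widetilde U$. Shrinking $U$ so that $\tilde g$ carries a neighbourhood of $\widetilde y_0$ into $\widetilde U$, the coherence $g(z_{i+1})=z_i$ lifts to $\tilde g(\widetilde z_{i+1})=\widetilde z_i$ for all large $i$; setting $\widetilde z:=\tilde g^{M}(\widetilde z_M)$ for $M$ large then yields $\Phi(\widetilde z)=x$, after checking the large-$i$ coordinates via this coherence and the small-$i$ coordinates via $g^{M-i}(z_M)=z_i$. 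Thus $\Phi$ is a bijection $\widetilde Y\to X^u(P)$ with $q\circ\Phi=\pi$.

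It remains to show $\Phi$ is a homeomorphism for the topology on $X^u(P)$, and this is the step I expect to be the main obstacle, since that topology is the inductive limit built from the nested local unstable sets rather than the subspace topology inherited from $X$. Using $X^u(P)=\bigcup_n\varphi^n(X^u(P,\epsilon))$ together with the exhaustion $\widetilde Y=\bigcup_n\tilde g^n(B)$ of $\R^d$ by expanding iterates of a small ball $B$ about $\widetilde y_0$, and the equivariance $\Phi\circ\tilde g=\varphi\circ\Phi$, the problem reduces to the single local statement that $\Phi$ restricts to a homeomorphism from $B$ onto $X^u(P,\epsilon)$. For this I would exploit the local product structure of the solenoid to show that $q$ maps $X^u(P,\epsilon)$ homeomorphically onto a neighbourhood of $y_0$ (a base disk together with its uniquely determined preimage tail), so that $q|_{X^u(P,\epsilon)}$ and $\pi|_B$ are homeomorphisms onto comparable neighbourhoods of $y_0$ and $\Phi|_B=(q|_{X^u(P,\epsilon)})^{-1}\circ\pi|_B$ is a homeomorphism. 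The colimit topologies then match, making $\Phi$ a global homeomorphism; since $\pi=q\circ\Phi$ is the universal cover, so is $q$.
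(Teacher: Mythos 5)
Your argument is correct, but it takes a genuinely different route from the paper's. The paper's proof is three lines: it quotes \cite[Theorem 3.12]{MR3868019} to get that the projection $q$ is a covering map, quotes Williams \cite{Wil} for $X^u(P,\epsilon)\cong\mathbb{D}_d$, and then notes that $X^u(P)$, being a nested increasing union of open $d$-cells, is homeomorphic to $\R^d$ (Brown's monotone-union theorem, used implicitly), hence simply connected, so the covering is universal. You instead construct an explicit homeomorphism $\Phi\colon\widetilde Y\to X^u(P)$ with $q\circ\Phi=\pi$, by lifting $g$ to $\tilde g$ fixing $\widetilde y_0$ and exploiting that $\tilde g^{-n}\to\widetilde y_0$; your bijectivity arguments (injectivity via an evenly covered neighbourhood of $y_0$, surjectivity by lifting the coherence $g(z_{i+1})=z_i$ once the tail of the sequence enters such a neighbourhood) are sound. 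What your route buys: it avoids both citations the paper leans on for the covering property and the cell-union step, and it yields extra structure, namely the conjugacy $\Phi\circ\tilde g=\varphi\circ\Phi$ identifying $(X^u(P),\varphi|_{X^u(P)})$ with the lifted expanding map $(\widetilde Y,\tilde g)$ --- very much in the spirit of the paper's own covering-space proof of the Morita equivalence for $G^u(P)$. What the paper's route buys is brevity, and it isolates the one nontrivial input, Williams' local structure; note that your final step needs that same input in the same place, since the assertion that $q$ maps $X^u(P,\epsilon)$ homeomorphically onto a neighbourhood of $y_0$ (with coordinates the uniquely determined preimage tail) is precisely the content of $X^u(P,\epsilon)\cong\mathbb{D}_d$ via the zeroth coordinate. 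Two details to pin down when writing out your topological step: first, exact equality $\Phi(B)=X^u(P,\epsilon)$ is awkward to arrange (the constant $C$ in the expansion estimate distorts radii), but it is not needed --- interleaved exhaustions $\Phi(B')\subseteq X^u(P,\epsilon)\subseteq\Phi(B)$ suffice, as both inductive-limit topologies are insensitive to the choice of exhaustion; second, continuity of $\Phi^{-1}$ on $X^u(P,\epsilon)$ requires observing that $\Phi^{-1}(X^u(P,\epsilon))$ lies in a single sheet of $\pi$ over $q(X^u(P,\epsilon))$, which again follows from the fact that points of $X^u(P,\epsilon)$ are coherent preimage chains staying uniformly near $y_0$. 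Both points are routine, so your plan goes through.
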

\begin{proof}
By \cite[Theorem 3.12]{MR3868019}, the map in the statement of the theorem is a covering map. By \cite{Wil}, $X^u(P, \epsilon) \cong \mathbb{D}_d$ where $\mathbb{D}_d$ is the open disk of dimension $d={\rm dim}(Y)$. Since 
\[ \displaystyle X^u(P) = \bigcup_{n\in \N} \varphi^n(X^u(P, \epsilon)) \]
we have that $X^u(P)$ is homeomorphic to $\R^d$ and hence the map is the universal cover.
\end{proof}

\subsection{The transfer map}

Suppose $X$ and $Z$ are closed manifolds and $f: X \rightarrow Z$ is a $n$-fold cover; we will be interested in the case when $X=Z$. Associated to $f$ there are induced maps on (co)homology:
\begin{align*}
f_* : H_*(X) \rightarrow H_*(Z) \\
f^*: H^*(Z) \rightarrow H^*(X) \\
\end{align*}
There are also induced maps on $K$-homology and $K$-theory. 

In addition to these induced maps, there are transfer maps associated to $f$. For (co)homology, the transfer maps will be denoted by
\begin{align*}
t_{{\rm homology}} : H_*(Z) \rightarrow H_*(X) \\
t_{{\rm cohomology}}: H^*(X) \rightarrow H^*(Z) \\
\end{align*}
Likewise there are transfer maps on $K$-homology and $K$-theory. We will use the following properties of the transfer map (see for example \cite{MR3073917} for details):
\begin{proposition} \label{propTransferProp}
The transfer map has the following properties
\begin{enumerate}
\item For both cohomology and $K$-theory, it is a rational surjection. For both homology and $K$-homology, it is a rational injection. The transfer map is a rational isomorphism when $f$ is a self-cover of a closed manifold.
\item For cohomology, $t_{{\rm cohomology}} \circ f^*$ is multiplication by $n$ and for homology, $f_* \circ t_{{\rm homology}}$ is multiplication by $n$.
\end{enumerate}
\end{proposition}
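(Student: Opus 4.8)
The plan is to treat property (2) as the fundamental input and obtain all of property (1) from it by elementary linear algebra after tensoring with $\Q$. Throughout, recall that $f$ is an $n$-fold cover with $n \ge 2$, so that multiplication by $n$ is a rational isomorphism.

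First I would establish property (2). For ordinary (co)homology it comes from the chain-level transfer of a finite cover. Since every singular simplex $\sigma \colon \Delta^k \to Z$ has a simply connected domain, it admits exactly $n$ lifts $\widetilde\sigma_1, \ldots, \widetilde\sigma_n$ along $f$, and $\tau(\sigma) := \sum_{i=1}^n \widetilde\sigma_i$ defines a chain map $\tau \colon C_*(Z) \to C_*(X)$ satisfying $f_* \circ \tau = n\cdot\id$, because $f\circ\widetilde\sigma_i = \sigma$ for each $i$. Passing to homology yields $t_{\rm homology}$ with $f_* \circ t_{\rm homology}$ equal to multiplication by $n$; dualizing $\tau$ and using $(f_*\circ\tau)^* = \tau^*\circ f^*$ yields $t_{\rm cohomology}$ with $t_{\rm cohomology}\circ f^*$ equal to multiplication by $n$. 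For $K$-theory and $K$-homology the same relations $t\circ f^* = n\cdot\id$ and $f_*\circ t = n\cdot\id$ are the defining property of the transfer in a generalized (co)homology theory, and here I would simply cite \cite{MR3073917} rather than reconstruct the spectrum-level argument.

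Next I would deduce the rational statements in property (1). After tensoring with $\Q$, the composite $t_{\rm cohomology}\circ f^*$ becomes multiplication by $n$, hence an isomorphism, and this forces $t_{\rm cohomology}$ to be rationally surjective (and $f^*$ rationally injective); the identical argument gives rational surjectivity of the $K$-theory transfer. Dually, $f_*\circ t_{\rm homology}$ becoming a rational isomorphism forces $t_{\rm homology}$ to be rationally injective, and the same holds in $K$-homology.

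Finally, for the self-cover case $X = Z$ I would invoke finite-dimensionality. When $Z$ is a closed manifold, the four transfers are endomorphisms of the finite-dimensional $\Q$-vector spaces $H^*(Z)\otimes\Q$, $H_*(Z)\otimes\Q$, $K^*(Z)\otimes\Q$, and $K_*(Z)\otimes\Q$, the latter two being finite-dimensional via the Chern character. A surjective endomorphism of a finite-dimensional vector space is injective, so the rationally surjective cohomology and $K$-theory transfers are rational isomorphisms; likewise an injective endomorphism is surjective, so the rationally injective homology and $K$-homology transfers are rational isomorphisms as well. The only step carrying genuine content is property (2) for $K$-theory and $K$-homology: the existence of a transfer satisfying $t\circ f^* = n\cdot\id$ in a generalized theory is the substantive fact, which I would import from the cited literature, with the remainder being formal.
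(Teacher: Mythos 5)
Your proposal is more than the paper itself offers: the paper gives no proof of this proposition at all, stating it as a known fact with a pointer to \cite{MR3073917}. Most of what you wrote is correct. The chain-level transfer $\tau(\sigma)=\sum_i\widetilde\sigma_i$ (the $n$ lifts exist and are exactly $n$ in number by unique lifting, and $\tau$ is a chain map because the lifts of the faces of $\sigma$ are precisely the faces of the lifts), the identity $f_*\circ\tau=n\cdot\id$ on chains, its dual giving $t_{{\rm cohomology}}\circ f^{*}=n\cdot\id$, and the linear-algebra deductions --- a rationally invertible composite forces rational surjectivity (resp.\ injectivity) of the outer (resp.\ inner) map, and finite-dimensionality of $H^{*}(Z)\otimes\Q$, $K^{*}(Z)\otimes\Q$, etc.\ for a closed manifold upgrades this to a rational isomorphism in the self-cover case --- are all sound.

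There is, however, a genuine error at exactly the step you flag as ``the only step carrying genuine content.'' The relations $t\circ f^{*}=n\cdot\id$ and $f_{*}\circ t=n\cdot\id$ do \emph{not} hold integrally in $K$-theory or $K$-homology, and they are not the defining property of the transfer in a generalized cohomology theory; note that the proposition itself asserts the composite formula only for ordinary (co)homology, which is no accident. By the projection formula, $t_{{\rm K-theory}}(f^{*}x)=x\cdot t_{{\rm K-theory}}(1)$, and $t_{{\rm K-theory}}(1)$ is the class of the flat rank-$n$ bundle $f_{!}\underline{\C}$ associated to the cover, which need not equal $n\cdot 1$. Concretely, for the double cover $f:S^{2}\rightarrow \mathbb{R}P^{2}$ one has $t(1)=1+[L]$, where $L$ is the line bundle attached to the sign representation of $\pi_{1}(\mathbb{R}P^{2})$; since $[L]-1$ is the nonzero element of $\tilde{K}^{0}(\mathbb{R}P^{2})\cong\Z/2\Z$, the composite $t\circ f^{*}$ is not multiplication by $2$. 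So you cannot ``simply cite'' \cite{MR3073917} for the integral identity --- it is false. The repair is local and preserves your architecture: the Chern character intertwines the $K$-theoretic and cohomological transfers of a covering map (there is no Todd-class correction, the relative tangent bundle being zero-dimensional), so $t(1)-n$ has vanishing Chern character and is therefore torsion; hence $t\circ f^{*}$ \emph{is} rationally multiplication by $n$, and likewise $f_{*}\circ t$ in $K$-homology. Equivalently, apply ${\rm ch}\otimes\Q$ to reduce the rational $K$-theory and $K$-homology statements to the cohomological ones you actually proved. With that substitution your deduction of property (1) goes through verbatim.
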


\subsection{Groupoids associated to covering maps}
Suppose $\pi : \tilde{Y} \rightarrow Y$ is a coving map. Then there is an \'etale groupoid associated to $\pi$:
\[
G(\pi)=\{ (w, z) \in \tilde{Y} \times \tilde{Y} \mid \pi(w)=\pi(z) \}.
\]
The relevant topology is the subspace topology from $G(\pi) \subseteq \tilde{Y}\times \tilde{Y}$. For more on this construction see \cite{MR3084709}. It is worth noting that since $\pi$ is a covering map, it is a local homeomorphism so it fits within the context of \cite{MR3084709}. 

We will assume the reader is familiar with the notion of Morita equivalence for $C^*$-algebras and related equivalences for groupoids. The reader can find details on this in \cite{MR873460} also see \cite{MR4030921}. The groupoid $G(\pi)$ is equivalent to $Y$ as a trivial groupoid using $\tilde{Y}$ as the middle space. The action of $G(\pi)$ on $\tilde{Y}$ is given by
\[
(w, z) \cdot \tilde{y} = \left\{ \begin{array}{ll} w & \hbox{when }z=\tilde{y} \\ \hbox{not defined } & \hbox{otherwise } \end{array} \right.
\]
and the action of $Y$ on $\tilde{Y}$ is given by
\[
\tilde{y} \cdot y = \left\{ \begin{array}{ll} \tilde{y} & \hbox{when }y=\pi(\tilde{y}) \\ \hbox{not defined } & \hbox{otherwise } \end{array} \right.
\]

The following situation will be of interest in the paper. Suppose that $\pi : \tilde{Y} \rightarrow Y$ is the universal cover of $Y$ and $p: Y_0 \rightarrow Y$ is a finite covering map. By properties of the universal cover, there is a covering map $\pi_0: \tilde{Y} \rightarrow Y_0$ that fits in the following commutative diagram:
\[
  \begin{tikzcd}
    \tilde{Y} \arrow{dr}{\pi} \arrow[swap]{d}{\pi_0}  \\
     Y_0 \arrow{r}{p} & Y
  \end{tikzcd}
\]
It follows that there is an open inclusion of groupoids, $G(\pi_0) \subseteq G(\pi)$. For more on this construction see \cite[Example 3.9]{MR4030921}

The next theorem is likely known, but we could not find a proof in the literature (although \cite{MR3073917} and \cite{MR918241} contain related statements).

\begin{theorem} \label{transferThmForStable}
Using the notation in the previous few paragraphs, there are commutative diagrams 
\[
  \begin{tikzcd}
    K_*(C^*(G(\pi_0))) \arrow{d}{{\rm ME}}  \arrow{r}{\iota_*} & K_*(C^*(G(\pi))) \arrow{d}{{\rm ME}}  \\
    K^*(Y) \arrow{r}{t_{K{\rm -theory}}}  & K^*(Y)
  \end{tikzcd}
\]
and
\[
  \begin{tikzcd}
    H_*(G(\pi_0)) \arrow{d}{{\rm ME}}  \arrow{r}{\iota_*} & H_*(G(\pi)) \arrow{d}{{\rm ME}}  \\
    H^*(Y) \arrow{r}{t_{cohomology}}  & H^*(Y)
  \end{tikzcd}
\]
where
\begin{enumerate}
\item $\iota_*$ is the map on $K$-theory/cohomology induced from the open inclusion $G(\pi_0) \subseteq G(\pi)$;
\item ${\rm ME}$ is the isomorphism on $K$-theory/cohomology induced from the Morita equivalences discussed in the paragraph before the statement of the theorem;
\item $t_{K{\rm -theory}}$ and $t_{cohomology}$ are the transfer maps on $K$-theory/cohomology associated to the cover $p: Y_0 \rightarrow Y$.
\end{enumerate}
\end{theorem}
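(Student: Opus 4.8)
The plan is to prove both commutative squares at the level of bimodules (equivalently, Kasparov classes / correspondences), reducing each to a single, essentially formal, bimodule isomorphism that is forced by the factorization $\pi = p\circ\pi_0$. Throughout I use two facts established or recalled above: the open inclusion $\iota\colon G(\pi_0)\hookrightarrow G(\pi)$ induces, by extension by zero of compactly supported functions, a $*$-homomorphism $C^*(G(\pi_0))\to C^*(G(\pi))$ (and, at the chain level, a map of the Crainic--Moerdijk complexes); and the Morita equivalences preceding the theorem are both implemented by the linking space $\tilde{Y}$. For the $K$-theory square I would first make the two imprimitivity bimodules explicit as completions of $C_c(\tilde{Y})$. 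For $G(\pi)\sim Y$ let $M$ be the completion with right action $(\xi\cdot f)(w)=\xi(w)f(\pi(w))$, inner product $\langle \xi,\eta\rangle(y)=\sum_{w\in \pi^{-1}(y)}\overline{\xi(w)}\eta(w)$, and left action $(a\cdot\xi)(w)=\sum_{\pi(z)=\pi(w)}a(w,z)\xi(z)$; define $M_0$ for $G(\pi_0)\sim Y_0$ by the same formulas with $\pi$ replaced by $\pi_0$. For the transfer, let $N$ be $C(Y_0)$ viewed as a $C(Y_0)$--$C(Y)$ bimodule, with right action through $p^*$ and inner product $\langle \xi,\eta\rangle(y)=\sum_{x\in p^{-1}(y)}\overline{\xi(x)}\eta(x)$; since $p$ is an $n$-fold cover this is finitely generated projective and the class $[N]\in KK(C(Y_0),C(Y))$ sends a bundle $E$ on $Y_0$ to its pushforward $p_*E$ on $Y$, so that $[N]$ is exactly $t_{K\text{-theory}}$.

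Commutativity of the $K$-theory square is then the identity
\[
[M_0]\otimes_{C(Y_0)}[N] \;=\; [\iota]\otimes_{C^*(G(\pi))}[M]
\quad\text{in } KK(C^*(G(\pi_0)),C(Y)),
\]
where $[\iota]$ is the class of $C^*(G(\pi))$ with left action through $\iota$. I would verify this by showing the underlying bimodules are isomorphic. The right-hand side is $M$ with its left action restricted along $\iota$; because $\iota$ is extension by zero, for $a\in C_c(G(\pi_0))$ the restricted action is $(a\cdot\xi)(w)=\sum_{\pi_0(z)=\pi_0(w)}a(w,z)\xi(z)$, which is precisely the left action of $M_0$. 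The left-hand side simplifies through $M_0\otimes_{C(Y_0)}C(Y_0)\cong M_0$, and the resulting right $C(Y)$-structure has action $(\xi\cdot f)(w)=\xi(w)f\bigl(p(\pi_0(w))\bigr)=\xi(w)f(\pi(w))$ and inner product $\sum_{x\in p^{-1}(y)}\sum_{\pi_0(w)=x}\overline{\xi(w)}\eta(w)=\sum_{w\in\pi^{-1}(y)}\overline{\xi(w)}\eta(w)$, both equalities being nothing but $\pi=p\circ\pi_0$. Thus both sides are the completion of $C_c(\tilde{Y})$ equipped with identical data, so the identity map is the desired isomorphism; associativity of the Kasparov product then yields the square.

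For the homology square I would run the same argument with Crainic--Moerdijk groupoid homology in place of $K$-theory. The vertical maps are the Morita-invariance isomorphisms $H_*(G(\pi))\cong H_*(Y)=H^*(Y)$ (and likewise for $\pi_0$) implemented by $\tilde{Y}$, the top map $\iota_*$ is induced by extension by zero $C_c(G(\pi_0)^{(\bullet)})\to C_c(G(\pi)^{(\bullet)})$, and the content is again that, after transport through the Morita equivalences, extension by zero becomes summation over the fibres of $p$, which is exactly the simplicial/\v{C}ech description of $t_{\mathrm{cohomology}}$. I expect this to be the main obstacle: groupoid homology carries no bivariant ($KK$) calculus to render the identity above a one-line formality, so the matching must be carried out on the explicit chain complexes, checking that the fibre-sum in the inner product of $N$ is reproduced by the extension-by-zero chain map along the zig-zag of quasi-isomorphisms furnished by $\tilde{Y}$. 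When $Y$ is aspherical, as for flat manifolds, one may instead use $G(\pi)\cong \tilde{Y}\rtimes\pi_1(Y)$ and $G(\pi_0)\cong \tilde{Y}\rtimes\pi_1(Y_0)$ and reduce to the statement that $\iota_*$ is the classical transfer for the finite-index inclusion $\pi_1(Y_0)\le\pi_1(Y)$, which streamlines the bookkeeping.

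Finally, I note that in the stated generality the lower-left corners should read $K^*(Y_0)$ and $H^*(Y_0)$, with $t_{K\text{-theory}}$ and $t_{\mathrm{cohomology}}$ running from $Y_0$ to $Y$ as above; these coincide with the targets in the self-cover case $Y_0=Y$ that is applied later in the paper.
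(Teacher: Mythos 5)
Your $K$-theory argument is essentially the paper's own proof. The paper also reduces commutativity of the first square to an isomorphism of bimodules, namely $\mathcal{E}_{\pi_0} \otimes_{C(Y_0)} \mathcal{T}^{Y_0}_Y \cong C^*(G(\pi)) \otimes_{C^*(G(\pi))} \mathcal{E}_{\pi}$, where $\mathcal{E}_{\pi}$ and $\mathcal{E}_{\pi_0}$ are the completions of $C_c(\tilde{Y})$ implementing the two Morita equivalences and $\mathcal{T}^{Y_0}_Y$ is exactly your module $N$ (the vector space $C(Y_0)$ with $C(Y)$ acting through $p$); it then identifies $\mathcal{E}_{\pi_0} \otimes_{C(Y_0)} C(Y_0) \cong \mathcal{E}_{\pi}$ as right $C(Y)$-modules and invokes Blackadar, Example 15.5.2(a), for compatibility with the Kasparov product. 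Your fibrewise computation that the composed inner product $\sum_{x \in p^{-1}(y)} \sum_{\pi_0(w)=x} \overline{\xi(w)}\eta(w)$ equals $\sum_{w \in \pi^{-1}(y)} \overline{\xi(w)}\eta(w)$, and that the restricted left action along the extension-by-zero inclusion is the $M_0$-action, is just this identification written out; so the $K$-theory half is the same proof, made more self-contained. Two points of genuine difference, both in your favor. First, the paper explicitly proves only the $K$-theory square (``We will only discuss the case of $K$-theory in detail''), so your homology discussion---either matching the extension-by-zero chain map against the fibre-sum on the Crainic--Moerdijk complexes through the $\tilde{Y}$-equivalences, or, using asphericity of flat manifolds, identifying $\iota_*$ with the classical transfer for the finite-index inclusion $\pi_1(Y_0) \leq \pi_1(Y)$---supplies an argument the paper omits entirely; you are right that this is where the real work lies, since no bivariant calculus is available there. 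Second, your closing remark is a correct reading of the statement: in the stated generality the lower-left corners of both diagrams should be $K^*(Y_0)$ and $H^*(Y_0)$, with the transfer maps running from $Y_0$ to $Y$; the paper's diagrams write $Y$ throughout because in the only case where the theorem is applied (the self-cover situation of Remark \ref{RemSpecialCoverTransferGroupoid}, where $Y_0 = Y$ and $p = g$) the two agree.
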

\begin{proof}
We will only discuss the case of $K$-theory in detail. Let $\Gamma$ be the fundamental group of $Y$ and $\Gamma_0$ be the finite index subgroup associated to the finite order cover $p: Y_0 \rightarrow Y$. 

The transfer map on $K$-theory is discussed in \cite{MR3073917}. In particular, it is given (at the level of modules) by
\[ E \mapsto E \otimes_{C(Y_0)} \mathcal{T}^{Y_0}_Y \]
where $\mathcal{T}^{Y_0}_Y$ is defined as follows. As a vector space it is $C(Y_0)$. The action of $C(Y_0)$ is the standard one and the action of $C(Y)$ is obtained from the identification of $Y_0$ with $\tilde{Y} \times_{\Gamma} \Gamma/\Gamma_0$, which is the quotient of $\tilde{Y} \times \Gamma/\Gamma_0$ by the diagonal action. The interested reader can find more details just before Lemma 3.12 in \cite{MR3073917}. 

The maps on $K$-theory induced from the Morita equivalences are given by the tensor product with the completion of $C_c(\tilde{Y})$. We denote these by $\mathcal{E}_{\pi}$ and $\mathcal{E}_{\pi_0}$. We note that although the starting point for each is $C_c(\tilde{Y})$ the norms that we complete with respect to are different so they are different modules: $\mathcal{E}_{\pi}$ is a $C^*(G(\pi))$-$C(Y)$ module and $\mathcal{E}_{\pi_0}$ is a $C^*(G(\pi_0))$-$C(Y_0)$ module.

The map $\iota_*$ is induced from a $*$-homomorphism. Namely, the inclusion $C^*(G(\pi_0)) \subseteq C^*(G(\pi))$. Hence, $\iota_*$ is given by the tensor product with $C^*(G(\pi))$ viewed as a $C^*(G(\pi_0))$-$C^*(G(\pi))$ module. 

With these four modules introduced, the proof reduces to showing that 
\[
\mathcal{E}_{\pi_0} \otimes_{C(Y_0)} \mathcal{T}^{Y_0}_Y \cong C^*(G(\pi)) \otimes_{C^*(G(\pi))} \mathcal{E}_{\pi}.
\]
This follows from \cite[Example 15.5.2(a)]{Black} and by identifying 
\[
\mathcal{E}_{\pi_0} \otimes_{C(Y_0)} \mathcal{T}^{Y_0}_Y \cong \mathcal{E}_{\pi_0} \otimes_{C(Y_0)} C(Y_0) \cong \mathcal{E}_{\pi}
\]
as right $C(Y)$-modules.
\end{proof}

\begin{remark} \label{RemSpecialCoverTransferGroupoid}
A special case of the previous theorem is the following situation:
\[
  \begin{tikzcd}
    X^u(P) \arrow{dr}{\pi} \arrow[swap]{d}{\pi_0}  \\
     Y \arrow{r}{g} & Y
  \end{tikzcd}
\]
where $(Y, g)$, $X^u(P)$, etc are defined as in Subsection \ref{ExpEndSec} (see in particular Definition \ref{solenoidGroupoidDefn} and Proposition \ref{UniversalCoverProp}).
\end{remark}

\section{$K$-theory and homology} \label{KtheoryHomologySec}

\subsection{The stable groupoid/algebra} 

\begin{theorem} \label{KHstable}
Suppose that $G^s(P)$ is the stable groupoid obtained from an expanding endomorphism $g : Y \rightarrow Y$ where $Y$ is a flat manifold as in \ref{solenoidGroupoidDefn}. Then 
\[  K_*(C^*(G^s(P))) \cong \lim ( K^*(Y), t_{{\rm K-theory}}) \hbox { and }  H_*(G^s(P)) \cong \lim ( H^*(Y), t_{{\rm cohomology}}) \]
where $t_{{\rm K-theory}}$ is the transfer map in $K$-theory associated to $g$ and $t_{{\rm cohomology}}$ is the transfer map in cohomology associated to $g$.
\end{theorem}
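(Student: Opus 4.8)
The plan is to realise $G^s(P)$ as an increasing union of open subgroupoids, each of the type $G(\pi)$ studied in Theorem~\ref{transferThmForStable}, and then to pass to the inductive limit in $K$-theory and in groupoid homology; Theorem~\ref{transferThmForStable} is exactly what pins down the connecting maps, which (as advertised in the introduction) is the one piece missing from the known groupoid decomposition.

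First I would set up the decomposition. Using Proposition~\ref{UniversalCoverProp}, write $\pi\colon X^u(P)\to Y$, $(z_0,z_1,\dots)\mapsto z_0$, for the universal cover, and for $N\ge 0$ set $\rho_N:=g^N\circ\pi$, so that $\rho_N(x)=g^N(z_0)$ for $x=(z_0,z_1,\dots)$, equivalently $\rho_N=\pi\circ\varphi^N$. Each $\rho_N$ is again a universal covering map and $\rho_{N+1}=g\circ\rho_N$. I would then check, from the description of the topology on $G^s(P)$ recalled in Section~\ref{subSecGroupoids} together with the known decomposition of the stable groupoid of a Wieler solenoid \cite{PutSpi,Kil,MR4138909}, that
\[ G^s(P)=\bigcup_{N\ge 0} G(\rho_N), \]
an increasing union of open subgroupoids. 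Concretely, two points of $X^u(P)$ are stably equivalent precisely when they have a common image under some iterate $g^N\circ\pi$, and the conjugation by $\varphi^N$ that gives $(\varphi^{-N}\times\varphi^{-N})G(\pi)=G(\rho_N)$ matches the nested structure $X^s(x,\epsilon)\subseteq\varphi^{-1}(X^s(\varphi(x),\epsilon))\subseteq\cdots$ recorded in Section~\ref{SmaleSec}.

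Next I would identify the connecting maps. For each $N$ the inclusion $G(\rho_N)\subseteq G(\rho_{N+1})$ is exactly an instance of Theorem~\ref{transferThmForStable} and Remark~\ref{RemSpecialCoverTransferGroupoid}, taking the universal cover to be $\rho_{N+1}$, the intermediate cover to be $\rho_N$, and the finite self-cover to be $p=g$ (so that $g\circ\rho_N=\rho_{N+1}$ and $Y_0=Y$). Hence, after the Morita-equivalence isomorphisms $K_*(C^*(G(\rho_N)))\cong K^*(Y)$ and $H_*(G(\rho_N))\cong H^*(Y)$ (using that the groupoid homology of $Y$, viewed as a trivial groupoid, is $H^*(Y)$), the map $\iota_*$ induced by the inclusion becomes the transfer map $t_{{\rm K-theory}}$, respectively $t_{{\rm cohomology}}$, of $g$. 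Because the squares in Theorem~\ref{transferThmForStable} commute, this yields a commuting ladder identifying the whole inductive system with the stationary system defined by the transfer map.

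Finally I would pass to the limit. Since $C^*(G^s(P))$ is the inductive limit of the $C^*(G(\rho_N))$ along these open inclusions and $K$-theory is continuous, $K_*(C^*(G^s(P)))\cong\varinjlim_N K_*(C^*(G(\rho_N)))$, and the previous paragraph turns the right-hand side into $\lim(K^*(Y),t_{{\rm K-theory}})$. The homology statement follows identically, using that groupoid homology commutes with increasing unions of open subgroupoids \cite{MR1752294}. I expect the main obstacle to be the first step: verifying that the union exhausts $G^s(P)$ and, more importantly, that the \'etale topology on each level agrees with that of the covering groupoid $G(\rho_N)$, so that the hypotheses of Theorem~\ref{transferThmForStable} genuinely apply. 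Once the decomposition and the identification of the connecting maps are in place, the conclusion is formal.
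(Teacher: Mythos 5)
Your proposal is correct and is essentially the paper's own argument: your $G(\rho_N)$ are exactly the paper's groupoids $G_k(P)=\{(\mathbf{x},\mathbf{y})\in X^u(P)\times X^u(P)\mid g^k(x_0)=g^k(y_0)\}$, whose \'etale structure, Morita equivalence with $Y$, nested open inclusions, and exhaustion of $G^s(P)$ the paper quotes from \cite[Theorem 3.12]{MR3868019} (together with \cite{ThoAMS}). The paper then concludes, just as you do, by continuity of $K$-theory and groupoid homology under such increasing unions of open subgroupoids, with Theorem \ref{transferThmForStable} and Remark \ref{RemSpecialCoverTransferGroupoid} identifying the connecting maps as the transfer maps associated to $g$.
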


To prove this theorem we used a decomposition of the stable groupoid, see \cite{ThoAMS} (also see \cite{MR3868019} along with \cite{MR4138909} for a generalization).

\begin{definition}
For each integer $k\ge 0$, let 
\[
G_k(P)= \{ (\xn, \yn) \in X^u(P) \times X^u(P) \mid g^k(x_0)=g^k(y_0) \}.
\]
\end{definition}
Using \cite[Theorem 3.12]{MR3868019}, it follows that the groupoids $G_k(P)$ have the following properties:
\begin{enumerate}
\item For each $k$, $G_k(P)$ is an \'etale groupoid with the subspace topology.
\item For each $k$, $G_k(P)$ is Morita equivalent to $Y$.
\item There are nested open inclusions: 
\[ G_0(P) \subseteq G_1(P) \subseteq G_2(P) \subseteq \ldots \]
\item As topological groupoids, $G^s(P) = \cup_{k \ge 0} G_k(P)$.
\end{enumerate}

From these properties, it follows that both the $K$-theory and homology of the stable algebra are inductive limits. Furthermore, it follows from Theorem \ref{transferThmForStable} that the connecting map is given by the relevant transfer map (see Remark \ref{RemSpecialCoverTransferGroupoid}).

\subsection{The unstable groupoid/algebra}

\begin{theorem} \label{KHunstable}
Suppose that $G^u(P)$ is the unstable groupoid obtained from an expanding endomorphism $g : Y \rightarrow Y$ where $Y$ is a flat manifold as in \ref{solenoidGroupoidDefn}. Then 
\[  K_*(C^*(G^u(P))) \cong \lim ( K_*(Y), t_{{\rm K-homology}}) \hbox { and }  H_*(G^u(P)) \cong \lim ( H_*(Y), t_{{\rm homology}}) \]
where $t_{{\rm K-homology}}$ is the transfer map in $K$-homology associated to $g$ and $t_{{\rm homology}}$ is the transfer map in homology associated to $g$.
\end{theorem}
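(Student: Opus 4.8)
The plan is to prove the unstable theorem, dual to the stable case, by reducing it to a computation for the action groupoid of an odometer. Set $\Gamma = \pi_1(Y, y_0)$ and let $\tilde g : \tilde Y \to \tilde Y$ be the lift of $g$ fixing the chosen lift of $y_0$. Writing $\Gamma_n = (g_*)^n(\Gamma)$ gives a decreasing chain of finite-index subgroups with $[\Gamma_n : \Gamma_{n+1}] = \deg(g)$, and I set $\Omega := \varprojlim_n \Gamma/\Gamma_n$, the odometer, with its natural left $\Gamma$-action. I first identify $\Omega$ with the fibre $\{(z_n) \in X : z_0 = y_0\} = \varprojlim_n g^{-n}(y_0)$ sitting inside the unit space $X^s(P)$ of the unstable groupoid: a direct computation with $\varphi$ (using $g(y_0)=y_0$, so that $(\varphi^n z)_k = y_0$ for all $k \le n$) shows this fibre lies in $X^s(P)$, and the local product structure of the solenoid shows it is a compact open set meeting every unstable leaf, hence a complete transversal for $G^u(P)$.

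First I would construct the Morita equivalence $G^u(P) \sim \Gamma \ltimes \Omega$ announced in the introduction, using only covering space theory. By Proposition \ref{UniversalCoverProp} each unstable leaf is a copy of $\tilde Y \cong \R^d$ covering $Y$ through the zeroth-coordinate map, with deck group $\Gamma$; a deck transformation permutes the points of a leaf lying over $y_0$, hence permutes the transversal $\Omega$, and the induced holonomy action of $\Gamma$ on $\Omega$ is exactly the monodromy of the tower $(g^n)$, that is, the odometer action. Consequently the restriction of $G^u(P)$ to $\Omega$ is, as a topological groupoid, the action groupoid $\Gamma \ltimes \Omega$, and since $\Omega$ is a complete transversal this yields a groupoid equivalence $G^u(P) \sim \Gamma \ltimes \Omega$ inducing isomorphisms on groupoid homology and on the $K$-theory of the associated (amenable) $C^*$-algebras. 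The work here is to verify that the holonomy action is precisely the odometer and that $\Omega$ meets every unstable leaf; path-lifting in the tower $(g^n)$ together with the local product structure supply both, and this is where the covering space theory does its job.

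Next I would realize the invariants of $\Gamma \ltimes \Omega$ as stationary inductive limits. Since $C(\Omega) = \varinjlim_n C(\Gamma/\Gamma_n)$, and since groupoid homology and the $K$-theory of groupoid $C^*$-algebras are continuous for such inductive limits, it suffices to treat the finite approximants $\Gamma \ltimes (\Gamma/\Gamma_n)$. Each action is transitive with stabilizer $\Gamma_n$, so $\Gamma \ltimes (\Gamma/\Gamma_n)$ is equivalent to the one-object groupoid $\Gamma_n$; because $Y$ is aspherical and $g^n$ realizes the cover corresponding to $\Gamma_n$, we have $B\Gamma_n \cong Y$. Hence the groupoid homology of the $n$-th term is $H_*(\Gamma_n) = H_*(Y)$, and, using that $\Gamma_n$ is virtually abelian and so satisfies the Baum--Connes conjecture, the $K$-theory of its $C^*$-algebra is the $K$-homology $K_*(Y)$; every term of both systems is thus identified with $H_*(Y)$, respectively $K_*(Y)$.

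The remaining step, which I expect to be the main obstacle, is to identify the connecting maps with the transfer maps of $g$. The connecting map comes from the pullback $C(\Gamma/\Gamma_n) \to C(\Gamma/\Gamma_{n+1})$ along $\Gamma/\Gamma_{n+1} \twoheadrightarrow \Gamma/\Gamma_n$; after passing through the imprimitivity bimodules it becomes a wrong-way map $K_*(Y) \to K_*(Y)$ (respectively $H_*(Y) \to H_*(Y)$), which I claim is the transfer of the self-cover $g$. I would establish this by a bimodule computation entirely parallel to the proof of Theorem \ref{transferThmForStable}: assemble the transfer bimodule of the cover $B\Gamma_{n+1} \to B\Gamma_n$, the two imprimitivity bimodules, and the bimodule implementing the connecting homomorphism, and verify that the relevant balanced tensor products agree, citing the same associativity results. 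Granting this, the three stages combine to give
\[
K_*(C^*(G^u(P))) \cong \lim\big(K_*(Y), t_{{\rm K-homology}}\big) \quad\text{and}\quad H_*(G^u(P)) \cong \lim\big(H_*(Y), t_{{\rm homology}}\big),
\]
as asserted; throughout, the direction of the transfer can be checked against Proposition \ref{propTransferProp}(2).
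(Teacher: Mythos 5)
Your proposal follows the same route as the paper's proof: restrict $G^u(P)$ to the compact open transversal $Z=\{(z_0,z_1,\ldots)\in X : z_0=y_0\}\subseteq X^s(P)$, show by path lifting that the restricted groupoid is the odometer action groupoid (the paper proves exactly this as an explicit isomorphism of topological groupoids and then invokes \cite[Section 3]{MR4030921} for the Morita equivalence), decompose $C(\Omega)\rtimes\pi_1(Y)$ as a stationary inductive limit with terms $C(\Omega_k)\rtimes\pi_1(Y)\cong M_{n^k}(\C)\otimes C^*_r\bigl(g^k_*\pi_1(Y)\bigr)$ following \cite{Scarparo}, and identify each term with $K_*(Y)$, respectively $H_*(Y)$, using Baum--Connes, torsion-freeness of $\pi_1(Y)$, and the fact that $Y$ is a model for $B\pi_1(Y)$. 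All of that matches the paper. The homology half of your last step is also essentially in order: after imprimitivity the connecting map becomes the transfer in group homology $H_*(\Gamma_n)\to H_*(\Gamma_{n+1})$ for your $\Gamma_n=(g_*)^n(\pi_1(Y))$, which for covers of aspherical spaces agrees with the topological transfer; the paper handles this by citing \cite[Proposition 2.4]{Scarparo}.

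The gap is in the $K$-theoretic half of your final step. You assert that identifying the connecting map with $t_{{\rm K-homology}}$ is ``a bimodule computation entirely parallel to the proof of Theorem \ref{transferThmForStable},'' but the two situations are structurally different. In the stable case every vertical map in the comparison diagram is induced by a $C^*$-correspondence (the Morita equivalence of $G(\pi)$ with the trivial groupoid $Y$), so commutativity reduces to an isomorphism of balanced tensor products of bimodules. Here your imprimitivity bimodules only identify the $n$-th term with $K_*(C^*_r(\Gamma_n))$; the remaining identification of $K_*(C^*_r(\Gamma_n))$ with the $K$-homology group $K_*(Y)=KK_*(C(Y),\C)$ is the Baum--Connes assembly map, which is not induced by a bimodule between algebras appearing in your diagram ($K$-homology of $Y$ is not the $K$-theory of any of them). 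The statement that actually requires proof is that assembly intertwines the restriction map $K_*(C^*_r(\Gamma_n))\to K_*(C^*_r(\Gamma_{n+1}))$ (which is what the connecting homomorphism becomes after imprimitivity) with the $K$-homology transfer of the cover $B\Gamma_{n+1}\to B\Gamma_n$, i.e.\ of $g$. This is precisely where the paper does its real work: it uses Baum--Connes with coefficients to write $K_*(C(\Omega_i)\rtimes\pi_1(Y))\cong KK^{\pi_1(Y)}_*(C_0(\R^d),C(\Omega_i))$, notes that the connecting maps are $(g_i)_*$, and then uses the equivariant Poincar\'e self-duality of $C(\Omega_i)$ together with the homeomorphisms $(\R^d\times\Omega_i)/\pi_1(Y)\cong Y$ to convert these into the wrong-way maps $g!$, that is, into the transfer. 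To complete your argument you must either reproduce such a duality argument, or invoke a compatibility-of-assembly-with-transfer result of the kind found in \cite{MR918241}, or---if you want to stay with explicit modules---represent assembly itself as a Kasparov product with the Mishchenko bundle (which is finitely generated projective since $Y$ is compact) and verify the identity at that level. In any case this is an additional argument, not the associativity bookkeeping of Theorem \ref{transferThmForStable}.
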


To prove this theorem, we prove that the unstable groupoid is Morita equivalent to the orbit relation of the odometer action associated to $(Y, g)$. To do so, the odometer associated to $(Y, g)$ must be introduced. As above, using \cite[Theorem 1]{ShubExp}, $g$ has a fixed point $y_0$, which will be our based point (so that in particular, $\pi_1(Y)$ denotes $\pi_1(Y, y_0)$). Associated to $g$ is a chain of finite index, proper subgroup inclusions:
\[
\pi_1(Y) \supset g_*(\pi_1(Y)) \supset g^2_*(\pi(Y)) \supset \cdots
\] 
The associated odometer is obtained as follows. For the space, let
\[ \Omega = \varprojlim (\Omega_i, f_{i-1}^i) \]
where $\Omega_i= \pi_1(Y)/ g^i_*(\pi(Y))$ and $f_{i-1}^i$ is given by inclusion of cosets. Each $\Omega_i$ is a finite set (that contains more than one element) and hence $\Omega$ is a Cantor set. An element in $\Omega$ will be written as
\[ (\gamma_0 \pi_1(Y), \gamma_1 g_*(\pi_1(Y)), \gamma_2 g^2_*(\pi_1(Y)), \ldots ). \]
Next, the action of $\pi_1(Y)$ on $\Omega$ is given as follows:
\[
\gamma \cdot (\gamma_0 \pi_1(Y), \gamma_1 g_*(\pi_1(Y)),  \ldots ) =(\gamma \gamma_0 \pi_1(Y), \gamma \gamma_1 g_*(\pi_1(Y)),  \ldots ).
\]
where $\gamma \in \pi_1(Y)$. The space $\Omega$ with this action of $\pi_1(Y)$ is called the odometer action associated to $(Y, g)$. Let $R_{{\rm orbit}}$ be the orbit relation associated to this action. 

\begin{lemma}
Using the notation in the previous paragraphs, the groupoids $G^u(P)$ and $R_{{\rm orbit}}$ are Morita equivalent.
\end{lemma}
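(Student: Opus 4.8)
The plan is to realize $R_{\rm orbit}$ as the restriction of $G^u(P)$ to a compact open complete transversal, and then to invoke the standard fact that the reduction of an \'etale groupoid to a clopen subset of its unit space that meets every orbit is Morita equivalent to the original groupoid (see \cite{MR873460}). Write $\Gamma = \pi_1(Y, y_0)$ and set
\[
T := \{ x \in X^s(P) \mid x_0 = y_0 \},
\]
the set of stable points lying in the fiber of the coordinate projection over the fixed point $y_0$. Note $T \subseteq X^s(P)$, since a point with $x_0 = y_0$ satisfies $\varphi^n(x) = (y_0, \ldots, y_0, x_1, x_2, \ldots) \to P$.

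First I would identify $T$ with the odometer space $\Omega$. Since $g$ is expanding it is a covering map, and $g^i : Y \rightarrow Y$ is the covering corresponding to the subgroup $g^i_*(\pi_1(Y)) \le \Gamma$; hence unique path lifting from $y_0$ produces a $\Gamma$-equivariant bijection between the fiber $(g^i)^{-1}(y_0)$ and the coset space $\Gamma / g^i_*(\pi_1(Y)) = \Omega_i$, intertwining the monodromy action with left multiplication. A point of $T$ is precisely a compatible sequence $(y_0, x_1, x_2, \ldots)$ with $x_i \in (g^i)^{-1}(y_0)$, so passing to the inverse limit yields a $\Gamma$-equivariant homeomorphism $T \cong \varprojlim(\Omega_i, f^i_{i-1}) = \Omega$ under which the deck-transformation action on $T$ becomes the odometer action. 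On a transversal the Smale-space subspace topology agrees with the intrinsic stable topology, so $T$ is a compact open (Cantor) subset of $X^s(P)$.

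Next I would show that $T$ is a complete transversal for $\sim_u$ and that the restricted relation is $R_{\rm orbit}$. Completeness follows from Proposition \ref{UniversalCoverProp}: for any $x \in X^s(P)$ the coordinate projection restricts to a covering map $X^u(x) \rightarrow Y$ (by \cite[Theorem 3.12]{MR3868019}, exactly as in Proposition \ref{UniversalCoverProp}), which is in particular surjective, so $X^u(x)$ contains a point with zeroth coordinate $y_0$ and thus meets $T$. For the identification of the relation, fix $x \in T$ corresponding to $\omega \in \Omega$. By Proposition \ref{UniversalCoverProp} the map $X^u(x) \rightarrow Y$, $z \mapsto z_0$, is the universal cover, so the points of $X^u(x)$ lying in $T$ are exactly the fiber over $y_0$, a torsor for the deck group $\Gamma$ acting simply transitively. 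Under the equivariant identification $T \cong \Omega$ this fiber is precisely the orbit $\Gamma \cdot \omega$. Hence for $x, x' \in T$ corresponding to $\omega, \omega'$ we have $x \sim_u x'$ if and only if $\omega' \in \Gamma \cdot \omega$, giving $G^u(P)|_T = R_{\rm orbit}$ as \'etale groupoids.

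Finally, since $T$ is a clopen subset of the unit space $X^s(P)$ meeting every orbit of $G^u(P)$, the reduction $G^u(P)|_T$ is Morita equivalent to $G^u(P)$; combined with the identification $G^u(P)|_T = R_{\rm orbit}$ this yields the claimed equivalence. I expect the third step to be the main obstacle: one must match the deck-transformation action on the fiber of a \emph{single} unstable leaf with the globally defined odometer action on $T \cong \Omega$, which requires care because the universal cover $X^u(x) \rightarrow Y$ and the coordinate projection $X \rightarrow Y$ are a priori different covers. Verifying that the monodromy identifications are compatible through the inverse limit, and that the Cantor topology on $\Omega$ matches the stable topology on $T$, is where the substantive work lies.
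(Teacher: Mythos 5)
Your strategy is structurally the same as the paper's: the paper also works with the set $Z=\{x \in X^s(P) \mid x_0 = y_0\}$ (your $T$), identifies it with $\Omega$ by lifting loops through the covers $g^i$, identifies the restriction of the unstable relation to $Z$ with $R_{{\rm orbit}}$, and then cites \cite[Section 3]{MR4030921} for the Morita equivalence of $G^u(P)$ with its reduction to this transversal. The difference lies in how the middle identification is established, and this is exactly where your proposal has a genuine gap: the assertion that, under the identification $T \cong \Omega$, the fiber $X^u(x) \cap T$ of the cover $X^u(x) \rightarrow Y$ is precisely the orbit $\Gamma \cdot \omega$ \emph{is} the content of the lemma, and you do not prove it --- you flag it as ``where the substantive work lies.'' Concretely, two verifications are missing. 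Algebraically, one must show that the monodromy action of $\pi_1(Y,y_0)$ on the fiber $X^u(x)\cap T$ corresponds under the inverse-limit identification to the odometer action; this requires proving that a lift of a loop through the leaf projection $X^u(x)\rightarrow Y$ is, coordinate by coordinate, the unique $g^i$-lift (using continuity of the coordinate projections for the finer leaf topology and uniqueness of path lifting). The paper instead proves the two inclusions directly: points in the same $\pi_1(Y)$-orbit are unstably equivalent via an arclength estimate on lifted loops that uses the expanding property of $g$, and conversely unstable closeness of tails is used to build short, evenly-covered paths that assemble into a loop at $y_0$. Topologically, the reduction argument needs $G^u(P)|_T \cong \Omega \rtimes \pi_1(Y)$ as \emph{topological} groupoids, where $G^u(P)|_T$ carries the \'etale topology of Subsection \ref{subSecGroupoids} rather than the topology inherited from $X \times X$; the paper devotes the last part of its proof to showing this map is continuous and open, and your claim that the subspace topology on $T$ agrees with the intrinsic stable topology is likewise asserted without proof.

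That said, your route is viable and in some respects cleaner than the paper's. Applying Proposition \ref{UniversalCoverProp} leafwise (justified, as you note, by \cite[Theorem 3.12]{MR3868019}) packages the expanding estimates into the covering-map property once and for all: path lifting through $X^u(x)\rightarrow Y$ automatically stays in the unstable class, and transitivity of the monodromy action on fibers of a path-connected cover would yield both inclusions simultaneously, replacing the paper's metric arguments. Your explicit check that $T$ is a \emph{full} transversal (via surjectivity of $X^u(x)\rightarrow Y$) is also a detail the paper leaves implicit. But as written, the compatibility of monodromy with the inverse-limit identification and the matching of the \'etale topologies are exactly the paper's proof, and your attempt stops short of both.
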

\begin{proof}
We must relate the odometer to the solenoid. To begin, fix a Riemannian metric on $Y$ and recall that $y_0$ is a fixed point of $g$. The Cantor set in the odometer action can be described in terms of preimages of $y_0$ with respect to $g$, $g^2$, etc. The starting point is the observation that there is a one-to-one correspondence between $g^{-1}(y_0)$ and cosets associated to the subgroup $g_*(\pi_1(Y))$. 

This correspondence is given as follows. Given a coset, take a loop, $\gamma$, based at $y_0$ representing a class in that coset. Let $\tilde{\gamma}: [0,1] \rightarrow Y$ denote the unique lift of $\gamma$ with respect to $g$ to a path starting at $y_0$. Then $\tilde{\gamma}(1) \in g^{-1}(y_0)$. Furthermore, this defines the required one-to-one correspondence. That this process is well-defined and one-to-one follows from elementary facts from covering space theory.

Repeating this process with $g^{-2}(y_0)$, $g^{-3}(y_0)$, etc, one has that $\Omega$ is homeomorphic to 
\[
Z:=\{ (y_0, y_1, y_2, \ldots ) \mid y_0 \hbox{ is the fixed point above and }g(y_{i+1})=y_i \} 
\]
where the topology is the subspace topology when considering $Z$ as a subset of $X^s(P)$. The explicit map constructed above will be denoted by $\Phi: \Omega \rightarrow Z$. One can check that it is continuous and its inverse (which is also continuous) is given by the following:
\[
\yn \in Z \mapsto (\gamma_0 \pi_1(Y), \gamma_1 g_*(\pi_1(Y)), \gamma_2 g^2_*(\pi_1(Y)), \ldots ) \in \Omega 
\] 
where $\gamma_i=g^i \circ f_i$ where $f_i$ is a path $f_i: [0,1] \rightarrow Y$ with $f(0)=y_0$ and $f(1)=y_i$.

Consider the restriction of the unstable relation to $Z$. That is, let 
\[
R_u=\{ (\xn, \yn) \in Z \times Z \mid \xn \sim_{u} \yn \}.
\] 
Our goal is to show that $\Phi \times \Phi$ is an isomorphism of topological groupoids from $R_{{\rm orbit}}$ to $R_u$. We begin with the purely algebraic considerations.

With the goal of showing that this map is well-defined in mind, suppose that $\alpha$ is a loop based at $y_0$ representing an element in $\pi_1(Y)$ (which we can and will assume is smooth) and $(\gamma_0 \pi_1(Y), \gamma_1 g_*(\pi_1(Y)), \gamma_2 g^2_*(\pi_1(Y)), \ldots )$ is an element in $\Omega$. Then 
\[
((\gamma_0 \pi_1(Y), \gamma_1 g_*(\pi_1(Y)), \ldots ), (\alpha \gamma_0 \pi_1(Y), \alpha \gamma_1 g_*(\pi_1(Y)), \ldots )) \in R_{{\rm orbit}}.
\]
Then, for each $i$, 
\[ d( (\hat{\alpha} \cdot \tilde{\gamma_i})(1)), \tilde{\gamma_i}(1)) \leq {\rm arclength}(\tilde{\alpha}), \]
where 
\begin{enumerate}
\item $\tilde{\gamma_i}$ is the unique lift of $\gamma_i$ with respect to $g^i$ starting at $y_0$;
\item $\hat{\alpha}$ is the unique lift of $\alpha$ with respect to $g^i$ starting at $\tilde{\gamma_i}(1)$;
\item $\tilde{\alpha}$ is the unique lift of $\alpha$ with respect to $g^i$ starting at $y_0$.
\end{enumerate}
It is worth noting that $(\hat{\alpha} \cdot \tilde{\gamma_i})$ is also the unique lift of the concatenation of $\alpha$ and $\gamma$ starting at $y_0$.

Since $\tilde{\alpha}$ is a lift of $\alpha$ with respect to $g^i$, $g^i \circ \tilde{\alpha} = \alpha$. Hence, 
\[ 
{\rm arclength}(\tilde{\alpha}) \leq \frac{\lambda^{-i}}{C} {\rm arclength}(\alpha).
\]
It follows that given $\epsilon>0$ there exists $N\in \N$ such that if $i\ge N$, then 
\[ d( (\hat{\alpha} \circ \tilde{\gamma_i})(1)), \tilde{\gamma_i}(1)) \leq {\rm arclength}(\tilde{\alpha}) \leq \frac{\lambda^{-i}}{C} {\rm arclength}(\alpha) < \epsilon. \]
Hence $(\Phi \times \Phi)(R_{{\rm orbit}}) \subseteq R_u$.

Next we show that $(\Phi^{-1} \times \Phi^{-1})(R_u) \subseteq R_{orbit}$. As such, let 
\[
((y_0, y_1, \ldots ), (z_0, z_1, \ldots) ) =(\yn, \zn) \in R_u.
\]
Fix $\epsilon>0$. We can and will assume that $\epsilon$ is small enough so that 
\begin{enumerate}
\item For each $y \in Y$, $B(y, \epsilon)$ is diffeomorphic to $\R^d$ and
\item $g$ evenly covers $B(y, \epsilon)$ for each $y\in Y$. 
\end{enumerate}
Since $(\yn, \zn) \in R_u$, there exists $i_0$ such that 
\[
d_Y( y_i , z_i ) < \epsilon \hbox{ for any }i\geq i_0.
\]
The two assumptions above imply that there exists a path $\alpha_{i_0} : [0, 1] \rightarrow Y$ starting at $y_{i_0}$ and ending at $z_{i_0}$ such that
\[
\alpha_{i_0}(t) \in B(y_{i_0}, \epsilon) \hbox{ for each }t\in [0,1]
\] 
For each $i>i_0$, define $\alpha_i$ inductively via
\[
\alpha_i= (g|_{B(y_{i-1}, \epsilon)})^{-1} \circ \alpha_{i-1}
\]
where the fact that $g$ evenly covers $B(y_i, \epsilon)$ has been used to define $(g|_{B(y_{i-1}, \epsilon)})^{-1} : B(y_{i-1}, \epsilon) \rightarrow U \subseteq B(y_i, \epsilon)$ (where $U$ is an open set in $Y$) and for simplicity we have assumed that $C \lambda<1$ (if this is not the case, then one uses a sufficiently large $K$ so that $C \lambda^K<1$ and applies our argument to $g^K$).

For each $i< i_0$, defined 
\[
\alpha_i = g^{i_0-i} \circ \alpha_{i_0}.
\]
By construction, for each $i$, $\alpha_i$ is a path from $y_i$ to $z_i$. In particular, $\alpha_0$ is a loop based at $y_0$ and hence defines an element in $[\alpha_0] \in \pi_1(Y)$. Furthermore, by construction, $\alpha_i$ is the unique lift of $\alpha_0$ starting at $y_i$ with respect to $g^i$. It follows from how the odometer action is defined that $[\alpha_0] \cdot \Phi^{-1}(\yn)=\Phi^{-1}(\xn)$.

This completes the proof that $\Phi \times \Phi$ is an isomorphism of groupoids, but we still need to consider the topologies involved. To do so, recall that the topology on the action groupoid associated to the odometer is given as follows. The groupoid is $\Omega \times \pi(Y)$ with the product topology (note that $\pi(Y)$ has the discrete topology). Using the fact that the action is free we have the identification with the relation $R_{{\rm orbit}}$ via $(a, \alpha) \mapsto (\alpha \cdot a, a)$. The map $\Phi^{-1}\times \Phi^{-1}$ at the level of topological groupoids becomes 
\[
(\yn, \zn)=((y_0, y_1, \ldots ), (z_0, z_1, \ldots) ) \mapsto ( (z_0, z_1, \ldots), [\alpha_0])
\]
where $\alpha_0$ is constructed as in the previous paragraphs and we have used the fact that $\Omega$ and $Z$ are homeomorphic to identify these spaces. We will show that this map is continuous and open. 

To show it is continuous, let $(\yn^k, \zn^k)_{k\in \N}$ be a sequence converging to $(\yn, \zn)$ in the domain. We will show that $(\zn^k, [\alpha_0^k])_{k\in \N}$ converges to $(\zn, [\alpha_0])$ in $\Omega \times \pi(Y)$. Let $U \times \{ [\alpha_0] \}$ be an open set containing $(\zn, [\alpha_0])$. Let $i_0$ be as in the construction of $\alpha_0$. By shrinking $U$, we can assume that for each $(a_0, a_1, \ldots ) \in U$, we have that $a_i=z_i$ for each $i\le i_0$. 

Again, by possibly shrinking $U$, one can form the following open neighbourhood of $(\yn, \zn)$: 
\[
V:= \{ ( h( a_0, a_1, \ldots), (a_0, a_1, \ldots)) \mid (a_0, a_1, \ldots) \in U \}
\]
where $h: U \rightarrow X^s(\yn, \epsilon_X)$ is defined as in Subsection \ref{subSecGroupoids}.

Since $(\yn^k, \zn^k)_{k\in \N}$ converges to $(\yn, \zn)$ there exists $K\in \N$ such that $(\yn^k, \zn^k)\in V$ for each $k\geq K$. Using this fact, we have that $z^k_i=z_i$ for each $i\le i_0$. Hence, $\alpha_0^k=\alpha_0$ for each $k\geq K$. Finally, $\Phi^{-1} \times \Phi^{-1}(\yn^k, \zn^k)=(\zn^k, [\alpha_0])$, so that $\Phi^{-1} \times \Phi^{-1}(\yn^k, \zn^k) \in U$ for each $k\geq K$ as required. 

Next, we must show that $\Phi^{-1} \times \Phi^{-1}$ is an open map. However, the proof is very similar to the previous argument so we omit it. 

In summary, we have shown the restriction of $R_u$ to $Z$ is isomorphic to $R_{orbit}$. Hence, $R_u$ and $R_{orbit}$ are Morita equivalent, see \cite[Section 3]{MR4030921}. 
\end{proof}

We can now prove Theorem \ref{KHunstable}. The equivalence in the previous lemma allows us to use known results about the $K$-theory of $C^*( \Omega \rtimes \pi_1(Y))$ to compute the $K$-theory of the unstable algebra and likewise for homology. 

For $K$-theory, see \cite{Scarparo} page 2544, we have that 
\[ C(\Omega) \rtimes \pi_1(Y) \cong \varinjlim C(\Omega_i) \rtimes \pi_1(Y) \]
where $\Omega_i=\pi_1(Y)/ g^i_*(\pi_1(Y))$ and the map in the inductive limit is obtained from the map $\Omega_{i+1} \rightarrow \Omega_i$ defined using $g^{i+1}_*(\pi_1(Y)) \subseteq g^i_*(\pi_1(Y))$. Then, \cite[Proposition 2.3]{Scarparo}, implies that, for each $i$, 
\[ C(\Omega_i) \rtimes \pi_1(Y) \cong M_{n^i}(\C) \otimes C^*_r(\pi_1(Y)) \] 
where the fact that $g$ is an $n$-fold cover and (for each $i$) $p^i_*(\pi_1(Y)) \cong \pi_1(Y)$ has been used. It follows that
\[
K_*(C(\Omega_i) \rtimes \pi_1(Y)) \cong K_*(M_{n^i}(\C) \otimes C^*_r(\pi_1(Y))) \cong K_*(C^*_r(\pi_1(Y))) \cong K_*(Y)
\]
where in the last step we have used the fact that $\pi_1(Y)$ satisfies the Baum--Connes conjecture, $\pi_1(Y)$ is torsion-free, and $Y$ is a model for $B(\pi_1(Y))$.

Continuing, we determine the maps in the inductive system. The Baum--Connes conjecture with coefficients implies that for each $i$, 
\[
K_*(C(\Omega_i) \rtimes \pi_1(Y)) \cong KK^{\pi_1(Y)}_*(C_0(\R^d), C(\Omega_i))
\]
and the connecting maps in the inductive limit are given by 
\[ (g_i)_* : KK^{\pi_1(Y)}_*(C_0(\R^d), C(\Omega_i)) \rightarrow KK^{\pi_1(Y)}_*(C_0(\R^d), C(\Omega_{i+1})) \]
where at the space level $g_i: \Omega_{i+1} \rightarrow \Omega_i$ is defined using coset inclusion (i.e., $g^{i+1}(\pi_1(Y)) \subseteq g^i_*(\pi_1(Y))$). It is worth noting that the map $g_i$ is a covering map. The fact that the $C^*$-algebra $C(\Omega_i)$ is equivariantly Poincar\'e self-dual along with properties of transfer maps then implies that we have the following commutative diagram:
\[ \begin{CD}
		KK_*^{\pi_1(Y)}(C_0(\R^d), C(\Omega_i)) @>PD>> KK_*^{\pi_1(Y)}(C_0(\R^d \times \Omega_i), \C) \\
		@V(g_i)_* VV @V(g_i)! VV \\
		KK_*^{\pi_1(Y)}(C_0(\R^d), C(\Omega_{i+1}) @>PD>>KK_*^{\pi_1(Y)}(C_0(\R^d \times \Omega_{i+1}), \C)\\
\end{CD} \]
Furthermore at the space level, there is a commutative diagram
\[ \begin{CD}
		(\R^d \times \Omega_{i+1}) / \pi_1(Y) @>\cong >> Y \\
		@Vg_i VV @Vg VV \\
		(\R^d \times \Omega_i ) / \pi_1(Y) @>\cong >> Y \\
\end{CD} \]
where the horizontal maps are homeomorphisms. This leads to the commutative diagram
\[ \begin{CD}
		KK_*^{\pi_1(Y)}(C_0(\R^d \times \Omega_i), \C) @> \cong >> K_*((\R^d \times \Omega_i ) / \pi_1(Y)) @> \cong >> K_*(Y)  \\
		@V(g_i)! VV @. @Vg! VV \\
		KK_*^{\pi_1(Y)}(C_0(\R^d \times \Omega_{i+1}), \C) @>\cong >> K_*((\R^d \times \Omega_{i+1} ) / \pi_1(Y)) @> \cong >> K_*(Y)  \\
\end{CD} \]
which completes the argument for $K$-theory. For the statement about homology, one uses \cite[Proposition 2.4]{Scarparo}.


\begin{remark}

Using the fact that the homoclinic groupoid is Morita equivalent to $G^s(P) \times G^u(P)$, one can compute the $K$-theory of the homoclinic algebra and the homology of the homoclinic groupoid using Theorems \ref{KHstable} and \ref{KHunstable} along with the relevant K\"unneth formula.

One can also compute the $K$-theory of the Ruelle algebras using the above results and Theorem 5.10 in \cite{MR3868019}.

\end{remark}

\begin{remark}
It follows from the Morita equivalence in the previous theorem and the main result in \cite{DeeHKConj} that there is a Smale space whose unstable groupoid is a counterexample to the HK-conjecture.
\end{remark}

\subsection{General results}
Some general results that follow from the theorems in the previous section are discussed.
\begin{theorem} \label{mainResultZeroDegree}
Suppose $Y$ is a flat manifold (which is connected), $g : Y \rightarrow Y$ is an expanding endomorphism (which is an $n$-fold cover), $(X, \varphi)$ is the associated Smale space, $G^s(P)$ (respectively $G^u(P)$) is the stable (respectively unstable) groupoid of $(X, \varphi)$. Then 
\[ H_0(G^s(P)) \cong H_0(G^u(P)) \cong \Z\left[ \frac{1}{n} \right],\]
and the map induced from $\varphi^{-1}$ on $H_0(G^s(P))\otimes \Q$ and from $\varphi$ on $H_0(G^u(P)\otimes \Q$ are each given by multiplication by $n$. Moreover, the $K$-theory groups in degree zero of the stable and unstable algebras each contain $\Z\left[ \frac{1}{n} \right]$ as a factor.
\end{theorem}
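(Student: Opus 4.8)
The plan is to deduce everything from the inductive-limit descriptions in Theorems \ref{KHstable} and \ref{KHunstable} by isolating the degree-zero part, where the (co)homology of $Y$ is forced to be $\Z$ because $Y$ is connected. First I would record that $H^0(Y)\cong H_0(Y)\cong\Z$ and that on these degree-zero groups both induced maps $g^\ast,g_\ast$ are the identity. Feeding this into Proposition \ref{propTransferProp}(2), the relations $t_{\rm cohomology}\circ g^\ast=\times n$ and $g_\ast\circ t_{\rm homology}=\times n$ collapse to $t_{\rm cohomology}=t_{\rm homology}=\times n$ on the degree-zero groups. Hence the degree-zero part of each inductive system is the stationary system $\Z\xrightarrow{\,n\,}\Z\xrightarrow{\,n\,}\cdots$, whose limit is $\Z[1/n]$. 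This yields $H_0(G^s(P))\cong H_0(G^u(P))\cong\Z[1/n]$ at once.

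For the dynamics I would work with the filtration $G_0(P)\subseteq G_1(P)\subseteq\cdots$ computing $H_0(G^s(P))$. The key (and only nonformal) computation is that $\varphi\times\varphi$ carries $G_{k+1}(P)$ isomorphically onto $G_k(P)$: this is immediate from $\varphi(\mathbf{x})_0=g(x_0)$, since $g^{k+1}(x_0)=g^{k+1}(y_0)$ is exactly the condition that $(\varphi\mathbf{x},\varphi\mathbf{y})\in G_k(P)$. Thus $\varphi^{\pm1}$ acts on the stationary system as a one-step shift, and by naturality of the colimit this shift is multiplication by $n$ on $H_0(G^s(P))\otimes\Q\cong\Q$ (the opposite shift being $1/n$); matching the shift direction to the map named in the statement yields multiplication by $n$, and the unstable case is identical using the odometer levels $\Omega_{i+1}\to\Omega_i$ from the proof of Theorem \ref{KHunstable}.

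For the $K$-theory statement I would use the rank/augmentation homomorphism $r\colon K^0(Y)\to H^0(Y)=\Z$ (and dually the point-class augmentation $K_0(Y)\to H_0(Y)=\Z$ on the unstable side). Because the $K$-theory transfer of a bundle multiplies its rank by the covering degree, $r$ intertwines $t_{\rm K-theory}$ with $\times n$, so it is a surjection of inductive systems $(K^0(Y),t_{\rm K-theory})\to(\Z,\times n)$ and induces a surjection onto $\Z[1/n]$ in degree zero. To upgrade this to a genuine direct summand one must split the short exact sequence of inductive systems $0\to(\tilde K^0(Y),t)\to(K^0(Y),t)\xrightarrow{r}(\Z,\times n)\to 0$ after passing to the limit.

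The obstruction is that the obvious section $[1]$ is not a strict map of systems: $t_{\rm K-theory}[1]-n[1]$ is generally a nonzero element of $\tilde K^0(Y)$. However the Chern character sends $t_{\rm K-theory}[1]$ to $t_{\rm cohomology}(1)=n\in H^0(Y)$ with nothing in higher degrees, so $t_{\rm K-theory}[1]-n[1]$ has trivial image under the Chern character and therefore lies in the torsion subgroup $T(\tilde K^0(Y))$, whose order is controlled by Proposition \ref{flatManTorsionHomology}. I expect this splitting — correcting $[1]$ by a torsion class so that it becomes compatible with the connecting maps, and checking that the corrected section survives to the limit — to be the main obstacle. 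The homology half of the theorem, by contrast, is an honest isomorphism and requires none of this.
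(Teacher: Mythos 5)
Your computation of the groups themselves is the same as the paper's: in degree zero the (co)homology of the connected manifold $Y$ is $\Z$, the induced map of $g$ there is the identity, so Proposition \ref{propTransferProp}(2) forces the transfer to be multiplication by $n$, and the inductive limits of Theorems \ref{KHstable} and \ref{KHunstable} give $\Z\left[\frac{1}{n}\right]$. For the dynamical statement you take a genuinely different route: the paper invokes Hirsch's theorem \cite{MR0298701} to get an s-bijective factor map from the full $n$-shift onto $(X,\varphi)$ and then transports the known full-shift computation through the functoriality of Putnam's homology, whereas you observe that $\varphi\times\varphi$ carries $G_{k+1}(P)$ isomorphically onto $G_k(P)$ and hence acts as a one-step shift on the stationary system. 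Your key observation is correct (it follows from $g^k(g(x_0))=g^{k+1}(x_0)$ together with the fact that $\varphi$ preserves $X^u(P)$), and it makes this part self-contained. The soft spot is the phrase ``matching the shift direction to the map named in the statement'': that is where all the content of the attribution lies, and it cannot be settled post hoc. Under the standard covariant functoriality of groupoid homology, the automorphism $\varphi\times\varphi$ (the downward shift $G_{k+1}(P)\to G_k(P)$) induces multiplication by $n$ on $H_0(G^s(P))\cong\Z\left[\frac{1}{n}\right]$ and $\varphi^{-1}\times\varphi^{-1}$ induces multiplication by $1/n$ --- i.e.\ the naive labels come out \emph{opposite} to those in the statement, and similarly on the unstable side. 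The statement's labels are tied to Putnam's conventions (the maps entering his Lefschetz formula, which is how this theorem is used later in the paper), and the paper's detour through $D^u(\Sigma_n)$ inherits those conventions automatically; your route obliges you to fix the convention explicitly and then \emph{derive} which of $\varphi^{\pm1}$ gives $\times n$, rather than choosing the label to fit the statement. Since the two maps are mutually inverse and the claim is rational, this is a bookkeeping subtlety rather than a substantive error, but it must be addressed.

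On the $K$-theory claim you have correctly isolated the difficulty that the paper waves off with ``the proof \ldots is similar, so we omit the details'': the rank map is a surjection of systems onto $(\Z,\times n)$, the obvious section $[1]$ is not a map of systems, and $\mathrm{Ext}(\Z[1/n],-)$ does not vanish in general, so the extension of limits does not split for purely formal reasons. Your observation that $t_{\rm K-theory}[1]-n[1]$ is killed by the Chern character (which commutes with transfers of finite covers, the relative tangent bundle being trivial) and is therefore torsion is correct and is exactly the needed input; note that all you need about this torsion is finiteness, which follows from finite generation of $K^0(Y)$, not Proposition \ref{flatManTorsionHomology}. The splitting you flag as ``the main obstacle'' can then be finished by elementary homological algebra: writing $T\subseteq K^0(Y)$ for the (finite) torsion subgroup, the defect vanishes modulo $T$, so $(K^0(Y)/T,\bar t)$ splits as $(\Z,\times n)\oplus(\ker({\rm rank})/T,\bar t_0)$ as inductive systems; hence $\varinjlim (K^0(Y),t)$ is an extension of $\Z\left[\frac{1}{n}\right]\oplus\varinjlim(\ker({\rm rank})/T,\bar t_0)$ by the finite group $\varinjlim(T,t|_T)$. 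Pulling back along the summand $\Z\left[\frac{1}{n}\right]$ yields an extension of $\Z\left[\frac{1}{n}\right]$ by a finite group, and $\mathrm{Ext}(\Z[1/n],F)=0$ for finite $F$ (the relevant $\varprojlim^1$ vanishes by Mittag--Leffler), so this extension splits and $\Z\left[\frac{1}{n}\right]$ is a direct summand; the same argument runs on the $K$-homology side using the point class. So your outline is completable, but as submitted this step is a genuine gap --- one which, to be fair, the paper itself leaves unaddressed, since its omitted ``similar'' argument would have to confront the same extension problem.
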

\begin{proof}
Since $Y$ is connected, $H_0(Y) \cong H^0(Y) \cong \Z$. In both cases, the induced map from $g$ is given by the identity and hence the transfer map associated to $g$ (again in both cases) is given by multiplication by $n$. Theorems \ref{KHstable} and \ref{KHunstable} then imply that 
\[ H_0(G^s(P)) \cong H_0(G^u(P)) \cong \Z\left[ \frac{1}{n} \right].\]
To compute the induced map on homology, by \cite{MR0298701}, there is an s-bijective map from the full $n$ shift to $(X, \varphi)$. In fact, 
$H_0(G^u(P))\otimes \Q \cong D^s( \Sigma_n) \otimes \Q$ via the map induced from this s-bijective map. Finally, by functorial properties of Putnam's homology theory, there is a commutative diagram
\[ \begin{CD}
		D^u(\Sigma_n)\otimes \Q @>>> H_0(G^s(P))\otimes \Q  \\
		@V\sigma^u VV @V\varphi^u VV \\
		D^u(\Sigma_n)\otimes \Q @>>> H_0(G^s(P))\otimes \Q, \\
\end{CD} \]
and likewise with unstable replaced by stable. The result then follows since the induced map for the full $n$ shift is given by multiplication by $n$.

The proof of the statement about $K$-theory is similar, so we omit the details.
\end{proof}

\begin{theorem} \label{mainResultTopDegree}
Suppose $Y$ is a flat manifold (which is connected has dimension $d$), $g : Y \rightarrow Y$ is an expanding endomorphism, $(X, \varphi)$ is the associated Smale space, $G^s(P)$ (respectively $G^u(P)$) is the stable (respectively unstable) groupoid of $(X, \varphi)$. Then 
\[ H_d(G^s(P)) \cong H_d(G^u(P)) \cong \left\{ \begin{array}{ll}  \Z & \hbox{ if }Y\hbox{ is orientable} \\ \{ 0 \} & \hbox{ if }Y\hbox{ is not orientable} \end{array} \right., \]
and the map induced from $\varphi^{-1}$ on $H_0(G^s(P))$ and from $\varphi$ on $H_0(G^u(P))$ are each given by the identity map or its negation.
\end{theorem}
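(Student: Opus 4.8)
The plan is to read everything off the inductive-limit descriptions already established in Theorems \ref{KHstable} and \ref{KHunstable}. These give $H_d(G^s(P)) \cong \lim(H^d(Y), t_{{\rm cohomology}})$ and $H_d(G^u(P)) \cong \lim(H_d(Y), t_{{\rm homology}})$, so the task reduces to identifying the two stationary systems in the top degree $d$ and computing their limits. The first step is to recall the top (co)homology of a closed connected $d$-manifold: when $Y$ is orientable one has $H_d(Y) \cong H^d(Y) \cong \Z$, while when $Y$ is not orientable one has $H_d(Y) \cong 0$ but $H^d(Y) \cong \Z/2$. I would flag already here that this last asymmetry is the crux of the whole statement: the homology vanishes but the cohomology does not.

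In the orientable case I would pin down the connecting maps using the degree of $g$. As an $n$-fold self-cover of a closed oriented manifold, $g$ has degree $\pm n$, so $g_*$ on $H_d(Y) \cong \Z$ and $g^*$ on $H^d(Y) \cong \Z$ are both multiplication by $\pm n$. Substituting this into the relations of Proposition \ref{propTransferProp} --- that $g_* \circ t_{{\rm homology}}$ and $t_{{\rm cohomology}} \circ g^*$ are each multiplication by $n$ --- forces each transfer to be $\pm {\rm id}$ on the relevant copy of $\Z$. Since $\lim(\Z, \pm {\rm id}) \cong \Z$, this yields $H_d(G^s(P)) \cong H_d(G^u(P)) \cong \Z$.

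For the non-orientable case the unstable statement is immediate: $H_d(Y) = 0$ forces $\lim(H_d(Y), t_{{\rm homology}}) = 0$, hence $H_d(G^u(P)) = 0$. The stable statement is where the real difficulty lies, and I expect it to be the main obstacle. Here $H^d(Y) \cong \Z/2$ is pure $2$-torsion, and one must show that $\lim(\Z/2, t_{{\rm cohomology}})$ vanishes, i.e.\ that the transfer is eventually trivial on this $2$-torsion. The delicate point is that the relation that $t_{{\rm cohomology}} \circ g^*$ is multiplication by $n$ only constrains a composite and does not by itself determine the transfer on $\Z/2$ (for $n$ odd it would in fact pin the transfer down to the identity), so genuine orientation input is essential. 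The approach I would take is to pass to Poincar\'e duality with the orientation local system, identifying $H^d(Y) \cong H_0(Y; \Z^w)$ and rewriting $t_{{\rm cohomology}}$ in terms that can be analyzed on the orientation double cover of $Y$, so that non-orientability enters in an essential way. This is the step on which the entire non-orientable stable case turns, and it is where I would concentrate the effort and treat any needed $2$-torsion computation with the most care.

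Finally, for the assertion about the induced dynamics, I would argue exactly as in the proof of Theorem \ref{mainResultZeroDegree}. Under the inductive-limit identifications the homeomorphism $\varphi$ (respectively $\varphi^{-1}$) acts as the shift automorphism of the stationary system, so its effect on $H_d(G^u(P))$ (respectively $H_d(G^s(P))$) is governed by the connecting map computed above. In the orientable case that map is $\pm {\rm id}$ on $\Z$, so the induced map is the identity or its negation; in the non-orientable case the relevant group is $0$ and there is nothing to verify. As before, the cleanest way to make this identification of the $\varphi$-action rigorous is to invoke the functoriality of Putnam's homology together with an $s$-bijective map from a full shift, reducing the computation to the model case where the top-degree action is visibly $\pm {\rm id}$.
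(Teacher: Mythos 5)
Your treatment of the orientable case, of $H_d(G^u(P))$ in the non-orientable case, and of the action of $\varphi$ (via functoriality of Putnam's homology and the $s$-bijective map from the full shift, exactly as in the proof of Theorem \ref{mainResultZeroDegree}) coincides with the paper's intended argument; indeed the paper's entire proof is the remark that it is ``similar to the proof of the previous theorem,'' and those parts of your proposal simply carry that out. (Both you and the paper also silently read the ``$H_0$'' in the final sentence of the statement as $H_d$.)

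The genuine problem is the step you yourself flag as the crux and then defer: the claim that $H_d(G^s(P)) \cong \{0\}$ when $Y$ is non-orientable. That step cannot be completed, because the claim is false, and your own parenthetical remark is already the disproof. As you observe, when $n$ is odd the relation $t_{\rm cohomology} \circ g^* = n$ on $H^d(Y;\Z) \cong \Z/2\Z$ forces $g^* = t_{\rm cohomology} = \mathrm{id}$; Theorem \ref{KHstable} then gives $H_d(G^s(P)) \cong \lim(\Z/2\Z, \mathrm{id}) \cong \Z/2\Z \neq 0$. This is exactly what happens in the paper's own Klein bottle example: for the nine-fold cover ($n=9$, $d=2$) the paper computes $t_{\rm cohomology} = \mathrm{id}$ on $H^2$ and concludes $H_2(G^s(P)) \cong \Z/2\Z$, directly contradicting the theorem as stated. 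Moreover, the twisted Poincar\'e duality route you propose, if carried out, proves the opposite of the desired vanishing for \emph{every} $n$, not just odd $n$: by the projection formula $t_{\rm cohomology}(\alpha) \frown [Y] = g_*(\alpha \frown [Y])$, where $[Y] \in H_d(Y;\Z^w)$ is the twisted fundamental class (which is fixed by the homology transfer of a self-cover), the transfer on $H^d(Y;\Z) \cong \Z/2\Z$ is identified with $g_*$ on $H_0(Y;\Z^w) \cong \Z/2\Z$, and this map is the identity because $g$ is a local diffeomorphism and hence satisfies $w \circ g_* = w$. So for every expanding endomorphism of a non-orientable flat manifold one gets $H_d(G^s(P)) \cong \Z/2\Z$. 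The honest outcome of your analysis is therefore a corrected statement --- $H_d(G^u(P))$ is $\Z$ or $0$ as claimed, while $H_d(G^s(P))$ is $\Z$ in the orientable case and $\Z/2\Z$ in the non-orientable case --- rather than a proof of the statement as written; no amount of effort concentrated on the deferred step can close it.
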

\begin{proof}
The proof is similar to the proof of the previous theorem, so the details are therefore omitted.
\end{proof}

\begin{theorem} \label{mainResultQ}
Suppose $Y$ is a flat manifold and $g: Y\rightarrow Y$ is an expanding endomorphism. Then 
\begin{align*}
K_*(C^*(G^s(P))) \otimes \Q & \cong K^*(Y)\otimes \Q \\
H_*(G^s(P)) \otimes \Q & \cong H^*(Y) \otimes \Q. \\
K_*(C^*(G^u(P))) \otimes \Q & \cong K_*(Y)\otimes \Q \\
H_*(G^u(P)) \otimes \Q & \cong H_*(Y) \otimes \Q.
\end{align*}
\end{theorem}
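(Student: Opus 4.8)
The plan is to combine the structural descriptions in Theorems \ref{KHstable} and \ref{KHunstable} with the rational behaviour of the transfer map recorded in Proposition \ref{propTransferProp}. Those two theorems identify each of the four invariants with a \emph{stationary} inductive limit whose connecting map is the transfer map associated to the self-cover $g : Y \to Y$; for instance $K_*(C^*(G^s(P))) \cong \lim(K^*(Y), t_{{\rm K-theory}})$ and $H_*(G^u(P)) \cong \lim(H_*(Y), t_{{\rm homology}})$, with the remaining two cases analogous. Thus the entire statement reduces to understanding what happens to such a stationary limit after tensoring with $\Q$.

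First I would use that $-\otimes \Q$ commutes with inductive limits (tensoring is a left adjoint, so it preserves the filtered colimit defining the limit, and $\Q$ is flat). Writing $A$ for one of the groups $K^*(Y)$, $H^*(Y)$, $K_*(Y)$, $H_*(Y)$ and $t$ for the corresponding transfer map, this gives
\[ \lim(A, t) \otimes \Q \;\cong\; \lim\bigl(A\otimes \Q,\; t\otimes \id_{\Q}\bigr). \]
Next I would invoke Proposition \ref{propTransferProp}(1): since $g$ is a self-cover of the closed manifold $Y$, the transfer map in each of cohomology, homology, $K$-theory, and $K$-homology is a rational isomorphism, that is, $t \otimes \id_{\Q}$ is an isomorphism in every one of the four cases.

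Finally, a stationary inductive limit all of whose connecting maps are isomorphisms is canonically isomorphic to its initial term, since each structure map into the limit is then an isomorphism; hence $\lim(A\otimes\Q,\ t\otimes\id_{\Q}) \cong A \otimes \Q$. Assembling the four cases yields the four claimed isomorphisms. There is no real obstacle here: all of the substantive work has already been carried out in Theorems \ref{KHstable} and \ref{KHunstable} (the realization as inductive limits) and in Proposition \ref{propTransferProp} (the rationality of the transfer), and the present statement is a purely formal consequence. The only points requiring care are the exactness of $-\otimes\Q$ used to pass the tensor product through the limit, and the verification that $g$ qualifies as a self-cover of a closed manifold so that Proposition \ref{propTransferProp} applies in all four settings — both of which are immediate, the latter because flat manifolds are by definition closed.
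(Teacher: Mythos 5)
Your proposal is correct and follows essentially the same route as the paper: the paper's proof likewise combines the inductive limit descriptions of Theorems \ref{KHstable} and \ref{KHunstable} with the fact (Proposition \ref{propTransferProp}) that the transfer map is a rational isomorphism for a self-cover of a closed manifold. The extra details you supply (exactness of $-\otimes\Q$ and the identification of a stationary limit with isomorphic connecting maps with its initial term) are exactly the formal steps the paper leaves implicit.
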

\begin{proof}
The proof follows from the fact (see Proposition \ref{propTransferProp}) that the transfer map (in all relevant cases) is a rational isomorphism for finite order self-covers and the inductive limits given in Theorems \ref{KHstable} and \ref{KHunstable}.
\end{proof}

\section{Computations} \label{SecComputations}
\subsection{A question of Putnam}
In chapter 8 of \cite{PutHom}, Putnam asks a number of questions about his homology theory. For example, Question 8.1.1 is addressed in \cite{MR3646890} and Question 8.4.1 is addressed in \cite{PV2}. We will address Question 8.3.2, which asks the following:
\begin{question} \label{IanQuestion}
If the stable or unstable sets in a Smale space $(X, \varphi)$ are
contractible in their standard topology, then does there exist an integer, $k$, such that
\[ H^*(X) \cong H_{*-k}(G^u(P))? \]
\end{question}
For Smale spaces associated with certain substitution tiling systems the answer to this question is yes, see \cite[Remark 4.5]{PV2}. However, we will give an example that shows that the answer to this question is no. There is a reformulation of Putnam's question that does have a positive answer, see \cite{PV3} for details. Oversimplifying, the reformulation in \cite{PV3} takes orientation into account. Before presenting the counterexample, some general facts are needed.
\begin{theorem} \label{CechX}
Suppose that $(X, \varphi)$ is the Smale space obtained from an expanding endomorphism $g : Y \rightarrow Y$ where $Y$ is a flat manifold as in Definition \ref{solenoidGroupoidDefn}. Then the unstable sets of $(X, \varphi)$ with their standard topology are homeomorphic to $\R^d$, where $d$ is the dimension of $Y$ and 
\[
H^*(X) \cong \lim (H^*(Y), g^*)
\] 
where $g^*$ is the map on cohomology associated to $g$.
\end{theorem}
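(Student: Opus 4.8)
The plan is to treat the two assertions separately, as they rest on different tools. The statement about the unstable sets is essentially an extension of Proposition \ref{UniversalCoverProp}: there the map sending $(z_0, z_1, \ldots)$ to $z_0$ is shown to realize $X^u(P)$ as the universal cover of $Y$, with $X^u(P) \cong \R^d$ obtained by writing it as the nested union $\bigcup_{n} \varphi^n(X^u(P, \epsilon))$ of local unstable sets, each homeomorphic to the open disk $\mathbb{D}_d$ by Williams' local product structure. First I would observe that this argument does not use anything special about the fixed point $P$: for any $x \in X$, the projection $X^u(x) \rightarrow Y$, $(z_0, z_1, \ldots) \mapsto z_0$, is again a covering map by \cite[Theorem 3.12]{MR3868019}, and the local unstable sets $X^u(\varphi^{-n}(x), \epsilon)$ are again open $d$-disks. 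Hence $X^u(x)$ is a monotone union of open $d$-cells; its fundamental group is the direct limit $\varinjlim \pi_1(\mathbb{D}_d) = 0$, so it is simply connected and therefore is the universal cover of $Y$, giving $X^u(x) \cong \R^d$ for every $x$.

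For the cohomology computation, I would invoke the continuity of \v{C}ech cohomology. Since $X = \varprojlim(Y, g)$ is the inverse limit of the stationary system with every bonding map equal to $g$, and each copy of $Y$ is compact Hausdorff (so that $X$ is too, being a closed subset of a countable product of copies of $Y$), continuity of \v{C}ech cohomology yields a natural isomorphism
\[ H^*(X) \cong \varinjlim\left( H^*(Y) \xrightarrow{g^*} H^*(Y) \xrightarrow{g^*} \cdots \right) = \lim(H^*(Y), g^*), \]
where the connecting maps are the pullbacks $g^*$ induced by the bonding maps and the isomorphism is induced by the coordinate projections $p_n : X \rightarrow Y$. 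This is exactly the asserted formula.

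The routine checks are the compatibility $p_n = g \circ p_{n+1}$ (immediate from the definition of $X$) and the identification of the maps in the resulting direct system with $g^*$. I expect the main obstacle to be the first assertion, specifically confirming that the covering-map and local-disk structure established for $P$ genuinely persists for the unstable set of an arbitrary point, and that a simply connected monotone union of open $d$-cells is homeomorphic to $\R^d$ (a standard but nontrivial fact about monotone unions of open cells). The cohomology statement, by contrast, is essentially immediate once continuity of \v{C}ech cohomology is cited, since $X$ is presented directly as the relevant inverse limit.
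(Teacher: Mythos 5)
Your handling of the cohomology half coincides with the paper's proof: it, too, simply cites continuity of \v{C}ech cohomology with respect to inverse limits, applied to the stationary system defining $X$. For the unstable sets the skeleton is also shared --- Williams' identification $X^u(x,\epsilon)\cong\mathbb{D}_d$ plus the nested-union decomposition --- but the final step differs. The paper concludes $X^u(x)\cong\R^d$ directly from the nested union of open $d$-disks (implicitly, Brown's theorem that a monotone union of open $d$-cells is an open $d$-cell); this requires nothing beyond Williams' local structure, so it applies at every $x\in X$ without further comment. You instead rerun Proposition \ref{UniversalCoverProp} at an arbitrary point: the monotone union is simply connected (direct limit of fundamental groups), and a simply connected covering space of $Y$ must be the universal cover, which for a flat manifold is $\R^d$. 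That is a legitimate and more elementary alternative to Brown's theorem, but it hinges on the projection $X^u(x)\to Y$, $(z_0,z_1,\ldots)\mapsto z_0$, being a covering map for \emph{arbitrary} $x$, whereas \cite[Theorem 3.12]{MR3868019} is invoked in the paper only for the unstable set of the fixed point $P$. That extension is true (one can see it, for instance, from the local product structure at every point, or from the foliated-bundle picture of $X$), but it is the one step in your write-up that needs an actual argument --- or can be bypassed entirely by quoting the monotone-union theorem, which is in effect what the paper does. With that point patched, your proof establishes the theorem by an essentially equivalent, slightly rearranged route.
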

\begin{proof}
From \cite{Wil}, $X^u(x, \epsilon) \cong \mathbb{D}_d$ where $\mathbb{D}_d$ is the open disk of dimension $d={\rm dim}(Y)$. It then follows from the decomposition of the global unstable set discussed in Section \ref{SmaleSec} as a nested union that, for each $x\in X$, $X^u(x) \cong \R^d$. The second half of the statement follows since \v{C}ech cohomology is continuous with respect to inverse limits.
\end{proof}

\begin{example} 
Let $Y$ be the Klein bottle and $g: Y \rightarrow Y$ be the expanding endomorphism give by the nine fold cover described in Diagram \ref{dia9Klein}. Also, let $(X, \varphi)$ denote the Smale space associated to $(Y, g)$. The Smale space $(X, \varphi)$ has contractible unstable sets by Theorem \ref{CechX}. The cohomology of the Klein bottle is
\[
H^*(Y) \cong \left\{ \begin{array}{ll} \Z & *=0 \\ \Z & *=1 \\ \Z/2\Z & *=2 \\ \{ 0 \} & \hbox{else.} \end{array} \right.
\]
The map on cohomology associated to $g$ (denoted by $g^*$) is given by 
\[
g^*: H^*(Y) \rightarrow H^*(Y) = \left\{ \begin{array}{l} \hbox{ the identity on }H^0 \\ \hbox{ multiplication by }3 \hbox{ on }H^1 \\ \hbox{ the identity on } H^2. \end{array} \right.
\]
Using this formula for $g^*$ and Proposition \ref{propTransferProp}, the transfer map associated to $g$ is given by 
\[
t_{cohomology}: H^*(Y) \rightarrow H^*(Y) = \left\{ \begin{array}{l} \hbox{ multiplication by }9 \hbox{ on }H^0 \\ \hbox{ multiplication by }3 \hbox{ on }H^1 \\ \hbox{ the identity on } H^2. \end{array} \right.
\]
Using Theorem \ref{CechX}, 
\[ 
H^*(X) \cong \left\{ \begin{array}{ll} \Z & *=0 \\ \Z\left[ \frac{1}{3} \right] & *=1 \\ \Z/2\Z & *=2 \\ \{ 0 \} & \hbox{else.} \end{array} \right.
\]
Using Theorem \ref{KHstable}, 
\[ 
H^u_*(X, \varphi) \cong H_*(G^s(P)) \cong \left\{ \begin{array}{ll} \Z\left[ \frac{1}{9} \right] & *=0 \\ \Z\left[ \frac{1}{3} \right] & *=1 \\ \Z/2\Z & *=2 \\ \{ 0 \} & \hbox{else.} \end{array} \right.
\]
It follows from these computations that the answer to Question \ref{IanQuestion} is no.

One can likewise show that 
\[ 
H^s_*(X, \varphi) \cong H_*(G^u(P)) \cong \left\{ \begin{array}{ll} \Z\left[ \frac{1}{9} \right] & *=0 \\ \Z\left[ \frac{1}{3} \right] \oplus \Z/2\Z & *=1 \\ \{ 0 \} & \hbox{else,} \end{array} \right.
\]
starting with the fact that the homology of the Klein bottle is
\[
H_*(Y) \cong \left\{ \begin{array}{ll} \Z & *=0 \\ \Z \oplus \Z/2\Z  & *=1 \\  \{ 0 \} & \hbox{else.} \end{array} \right.
\]
\end{example}

On the positive side, we do have the following.
\begin{theorem} \label{orientablePutnamQuestion}
Suppose $g : Y \rightarrow Y$ is an expanding endomorphism where $Y$ is an orientable flat manifold and $(X, \varphi)$ is the associated Smale space. Then 
\[ H^*(X) \cong H^s_{d-*}(X, \varphi) \cong H_{d-*}(G^u(P)) \]
where $d$ is the dimension of $Y$.
\end{theorem}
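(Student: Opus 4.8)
The plan is to prove the essential isomorphism $H^*(X) \cong H_{d-*}(G^u(P))$; the remaining identification $H^s_{d-*}(X,\varphi) \cong H_{d-*}(G^u(P))$ is just the isomorphism between Putnam's stable homology and the homology of the unstable groupoid recorded in the introduction (see \cite{PV2}). First I would rewrite both sides as stationary inductive limits over $Y$. By Theorem \ref{CechX}, $H^*(X) \cong \lim(H^*(Y), g^*)$, where the connecting map is the pullback $g^*$. By Theorem \ref{KHunstable}, $H_*(G^u(P)) \cong \lim(H_*(Y), t_{{\rm homology}})$, and hence $H_{d-*}(G^u(P)) \cong \lim(H_{d-*}(Y), t_{{\rm homology}})$, where the connecting map is the homological transfer of $g$.

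Since $Y$ is a closed orientable manifold of dimension $d$, Poincar\'e duality provides isomorphisms ${\rm PD}\colon H^k(Y) \xrightarrow{\cong} H_{d-k}(Y)$ (cap product with the fundamental class $[Y]$). The crux of the argument is to show that this family of isomorphisms is compatible with the two connecting maps, i.e. that the square
\[ \begin{CD}
H^*(Y) @>g^*>> H^*(Y) \\
@V{\rm PD}VV @VV{\rm PD}V \\
H_{d-*}(Y) @>t_{{\rm homology}}>> H_{d-*}(Y)
\end{CD} \]
commutes. Granting this, ${\rm PD}$ is a term-by-term isomorphism of the two directed systems commuting with all connecting maps, and so induces an isomorphism of the inductive limits, yielding $H^*(X) \cong H_{d-*}(G^u(P))$.

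The main obstacle is the commutativity of the displayed square. This amounts to the classical fact that, for a covering of closed oriented manifolds, the homological transfer (the sum-over-sheets map appearing in Theorem \ref{KHunstable}) coincides with the Poincar\'e-dual umkehr map ${\rm PD}\circ g^*\circ {\rm PD}^{-1}$ of the cohomological pullback. I would verify this by comparing the definition of the umkehr map, via the projection formula for the cap product, with the covering transfer (compare the related discussions in \cite{MR3073917, MR918241}), paying attention to conventions and to a possible global sign according to whether $g$ preserves orientation; orientability of $Y$ is exactly what makes $[Y]$, and hence ${\rm PD}$, available with integral coefficients, which is why the hypothesis is needed. As consistency checks, the extreme degrees reproduce Theorems \ref{mainResultZeroDegree} and \ref{mainResultTopDegree}: under ${\rm PD}$, $g^*=\id$ on $H^0$ corresponds to $t_{{\rm homology}}$ being an isomorphism on $H_d$, while $g^*$ being multiplication by $\pm n$ on $H^d$ corresponds to $t_{{\rm homology}}=\times(\pm n)$ on $H_0$, whose limit is $\Z[1/n]$; moreover the relations $t_{{\rm cohomology}}\circ g^*=\times n$ and $g_*\circ t_{{\rm homology}}=\times n$ of Proposition \ref{propTransferProp} match precisely under duality, since ${\rm PD}$ also intertwines $g_*$ with $t_{{\rm cohomology}}$.
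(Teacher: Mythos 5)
Your proposal is correct and follows essentially the same route as the paper: both reduce the statement to the commutativity of the square intertwining $g^*$ with $t_{{\rm homology}}$ via Poincar\'e duality, and then pass to the stationary inductive limits provided by Theorems \ref{CechX} and \ref{KHunstable}. The only difference is that the paper simply asserts the commutativity of that square, whereas you sketch its verification via the umkehr/projection-formula argument, which is a reasonable elaboration rather than a different approach.
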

\begin{proof}
Since $Y$ is orientable, Poincar\'e duality holds. Moreover, the following diagram commutes:
\[ \begin{CD}
		H^*(Y) @>g^* >> H^*(Y) \\
		@VPD VV @VPD VV \\
		H_{d-*}(Y) @>t_{{\rm homology}} >> H_{d-*}(Y), \\
\end{CD} \]
where the vertical maps are the Poincar\'e duality isomorphisms. The result then follows from Theorem \ref{KHunstable} and Theorem \ref{CechX}.
\end{proof}
There is a version of the previous theorem in the context of $K$-theory:
\begin{theorem} \label{spincPutnamQuestion}
Suppose $g : Y \rightarrow Y$ is an expanding endomorphism where $Y$ is a spin$^c$ flat manifold and $(X, \varphi)$ is the associated Smale space. Then 
\[ K^*(X) \cong K_{d-*}(C^*(G^u(P))) \]
where $d$ is the dimension of $Y$ and indices are modulo two.
\end{theorem}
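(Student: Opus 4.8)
The plan is to mirror the proof of Theorem \ref{orientablePutnamQuestion}, replacing ordinary Poincar\'e duality by its $K$-theoretic counterpart. Since $Y$ is spin$^c$, it carries a $K$-homology fundamental class $[Y] \in K_d(Y)$, and cap product with $[Y]$ yields a Poincar\'e duality isomorphism $PD: K^*(Y) \xrightarrow{\cong} K_{d-*}(Y)$, with indices taken modulo two. First I would record the $K$-theoretic analogue of Theorem \ref{CechX}. Since $X = \varprojlim(Y, g)$, we have $C(X) \cong \varinjlim (C(Y), g^*)$, where the connecting map is $f \mapsto f \circ g$, and continuity of topological $K$-theory with respect to inductive limits of $C^*$-algebras gives
\[
K^*(X) \cong \lim(K^*(Y), g^*),
\]
where $g^*$ denotes the pullback on $K$-theory.

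The heart of the argument is the $K$-theoretic version of the commuting square used in Theorem \ref{orientablePutnamQuestion}, namely that Poincar\'e duality intertwines the pullback $g^*$ on $K$-theory with the transfer map on $K$-homology:
\[ \begin{CD}
		K^*(Y) @>g^* >> K^*(Y) \\
		@VPD VV @VPD VV \\
		K_{d-*}(Y) @>t_{{\rm K-homology}} >> K_{d-*}(Y). \\
\end{CD} \]
This expresses the standard principle that, for a $K$-oriented covering map, the wrong-way (transfer) map in $K$-homology is Poincar\'e dual to the pullback in $K$-theory; concretely, $t_{{\rm K-homology}} = PD \circ g^* \circ PD^{-1}$. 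Here it is used that $g$ is a local diffeomorphism, so that the spin$^c$ structure on the base pulls back to the one used to $K$-orient the cover, making the duality isomorphisms on source and target compatible.

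Finally I would assemble the pieces. Passing to inductive limits, the commuting square shows that the directed system $(K^*(Y), g^*)$ is isomorphic to $(K_{d-*}(Y), t_{{\rm K-homology}})$, whence
\[
K^*(X) \cong \lim(K^*(Y), g^*) \cong \lim(K_{d-*}(Y), t_{{\rm K-homology}}) \cong K_{d-*}(C^*(G^u(P))),
\]
where the last isomorphism is Theorem \ref{KHunstable}. I expect the main obstacle to be a careful justification of the intertwining square: one must pin down the definition of $t_{{\rm K-homology}}$ as a Gysin map and verify its compatibility with the $K$-homology fundamental classes under the local diffeomorphism $g$. This is precisely the step where the spin$^c$ hypothesis, rather than mere orientability, is essential, since it is what provides the $K$-orientation needed for both the Poincar\'e duality isomorphisms and the wrong-way map to exist.
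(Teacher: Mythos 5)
Your proposal is correct and takes essentially the same approach as the paper: spin$^c$ Poincar\'e duality intertwining $g^*$ on $K^*(Y)$ with $t_{{\rm K-homology}}$ on $K_{d-*}(Y)$, combined with Theorem \ref{KHunstable} and the $K$-theoretic analogue of Theorem \ref{CechX}. You actually supply more detail than the paper, which leaves implicit both the justification of $K^*(X) \cong \lim(K^*(Y), g^*)$ (your continuity-of-$K$-theory argument for $C(X) \cong \varinjlim C(Y)$) and the verification of the intertwining square.
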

\begin{proof}
Since $Y$ is spin$^c$, Poincar\'e duality holds with respect to $K$-theory and $K$-homology. Moreover, the following diagram commutes:
\[ \begin{CD}
		K^*(Y) @>g^* >> K^*(Y) \\
		@VVV @VVV \\
		K_{d-*}(Y) @>t_{{\rm K-homology}} >> K_{d-*}(Y) \\
\end{CD} \]
where the vertical maps are the Poincar\'e duality isomorphisms. The result then follows from Theorem \ref{KHunstable} and Theorem \ref{CechX}.
\end{proof}
Putnam's question also applies to $G^s(P)$. However, in this case relating the inductive limit in Theorem \ref{KHstable} to the algebraic topology of $X$ is less clear because homology does not respect inverse limits. Nevertheless the next two theorems are useful because the induced map is usually easier to determine than the transfer map. They also indicate that the homology of $G^s(P)$ and the $K$-theory of its $C^*$-algebra are easier to compute than the homology of $X$.
\begin{theorem}
Suppose $g : Y \rightarrow Y$ is an expanding endomorphism where $Y$ is an orientable flat manifold and $(X, \varphi)$ is the associated Smale space. Then 
\[ H^u_{*}(X, \varphi) \cong H_{*}(G^s(P)) \cong \lim (H_{*-d}(Y), g_*) \]
where $d$ is the dimension of $Y$.
\end{theorem}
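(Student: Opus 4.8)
The plan is to reduce the statement to Poincar\'e duality applied to the inductive limit already obtained in Theorem \ref{KHstable}. The first isomorphism $H^u_*(X,\varphi) \cong H_*(G^s(P))$ is the general identification of Putnam's homology with groupoid homology from \cite[Theorem 4.1 and Remark 4.4]{PV2}, so no new work is required there. By Theorem \ref{KHstable} we already have $H_*(G^s(P)) \cong \lim(H^*(Y), t_{\mathrm{cohomology}})$. Hence the entire content is to transport this stationary inductive system across Poincar\'e duality, so as to replace $H^*(Y)$ by the homology of $Y$ and the transfer map by the induced map $g_*$.

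Since $Y$ is orientable, Poincar\'e duality provides isomorphisms $PD \colon H^k(Y) \to H_{d-k}(Y)$ in each degree. The crux is to check that these isomorphisms intertwine the two connecting maps, i.e.\ that the square
\[ \begin{CD}
H^*(Y) @>t_{\mathrm{cohomology}} >> H^*(Y) \\
@VPD VV @VPD VV \\
H_{d-*}(Y) @>g_* >> H_{d-*}(Y) \\
\end{CD} \]
commutes. This is exactly the Poincar\'e dual of the square used in the proof of Theorem \ref{orientablePutnamQuestion}: there the pullback $g^*$ on cohomology was matched with the homology transfer $t_{\mathrm{homology}}$, whereas here the cohomology transfer $t_{\mathrm{cohomology}}$ must be matched with the induced map $g_*$. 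These are the two Poincar\'e-dual pairings among the four maps $g^*$, $g_*$, $t_{\mathrm{cohomology}}$, $t_{\mathrm{homology}}$ attached to the covering $g$.

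To prove commutativity I would identify the cohomology transfer of $g$ with its Gysin (umkehr) homomorphism: for a finite covering of oriented closed manifolds of equal dimension the transfer and the Gysin map coincide, and the Gysin map is defined by $g_! = PD^{-1} \circ g_* \circ PD$, which upon rearrangement is precisely the displayed square. Once the square commutes, $PD$ is a degree-wise isomorphism of the two stationary inductive systems and therefore induces an isomorphism of their inductive limits; combined with Theorem \ref{KHstable} and the identification from \cite{PV2}, this yields the claimed chain of isomorphisms, the degree shift by $d$ arising from Poincar\'e duality $H^*(Y) \cong H_{d-*}(Y)$. The only genuine obstacle is the transfer--Gysin identification carried out with full orientation bookkeeping (checking that the fundamental class of $Y$ behaves correctly under the expanding self-cover $g$); the passage to inductive limits and the degree shift are then purely formal.
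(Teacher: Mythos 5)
Your proposal is correct and is essentially the paper's argument: the paper omits this proof, stating it is ``very similar'' to that of Theorem \ref{orientablePutnamQuestion}, and that proof is precisely your Poincar\'e-duality square with the dual pair $(g^*, t_{\mathrm{homology}})$ replaced by the pair $(t_{\mathrm{cohomology}}, g_*)$, followed by the passage from the stationary inductive limit of Theorem \ref{KHstable} (plus the identification of Putnam homology with groupoid homology from \cite{PV2}). The orientation bookkeeping you flag as the only obstacle is in fact harmless even if the expanding map $g$ reverses orientation: the resulting sign merely replaces the connecting map $g_*$ by $-g_*$, which does not change the isomorphism class of a stationary inductive limit.
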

\begin{theorem}
Suppose $g : Y \rightarrow Y$ is an expanding endomorphism where $Y$ is a spin$^c$ flat manifold and $(X, \varphi)$ is the associated Smale space. Then 
\[ K_{*}(C^*(G^s(P))) \cong \lim (K_{*-d}(Y), g_*) \]
where $d$ is the dimension of $Y$ and indices are modulo two.
\end{theorem}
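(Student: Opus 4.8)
The plan is to reduce the statement to the inductive-limit description of the stable $K$-theory already established in Theorem \ref{KHstable} and then transport that limit across Poincar\'e duality, in exact parallel with the proof of Theorem \ref{spincPutnamQuestion} but with the roles of the induced and transfer maps interchanged. By Theorem \ref{KHstable} we have
\[ K_*(C^*(G^s(P))) \cong \lim ( K^*(Y), t_{{\rm K-theory}}), \]
so it suffices to identify this inductive limit with $\lim (K_{*-d}(Y), g_*)$. Since $Y$ is spin$^c$, Poincar\'e duality furnishes isomorphisms $PD: K^*(Y) \xrightarrow{\cong} K_{d-*}(Y)$ (indices modulo two), and because $d-* \equiv *-d \pmod 2$ we may freely write $K_{d-*}(Y)=K_{*-d}(Y)$. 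Note that, unlike Theorem \ref{spincPutnamQuestion}, no attempt is made here to relate the result to the algebraic topology of $X$; the content is purely a reindexing of the stable limit.

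The heart of the argument is the commutativity of the square
\[ \begin{CD}
		K^*(Y) @>t_{{\rm K-theory}} >> K^*(Y) \\
		@VPD VV @VPD VV \\
		K_{d-*}(Y) @>g_* >> K_{d-*}(Y), \\
\end{CD} \]
whose vertical maps are the Poincar\'e duality isomorphisms. Once this is in hand, $PD$ induces an isomorphism of the two stationary inductive systems, hence of their limits, which together with the displayed isomorphism above completes the proof. This square is precisely the \emph{dual} of the one appearing in the proof of Theorem \ref{spincPutnamQuestion}: there Poincar\'e duality intertwines the induced map $g^*$ in $K$-theory with the transfer map $t_{{\rm K-homology}}$ in $K$-homology, whereas here it must intertwine the transfer map $t_{{\rm K-theory}}$ in $K$-theory with the induced map $g_*$ in $K$-homology.

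To verify the commutativity I would use that, since $g$ is a covering map and hence a local diffeomorphism, its transfer map in $K$-theory coincides with the Gysin (wrong-way) map $g_!$, the finiteness of the fibre turning integration along the fibre into a sum over the sheets of the cover. The naturality of spin$^c$ Poincar\'e duality with respect to such maps then reads $PD \circ g_! = g_* \circ PD$; that is, the wrong-way map in $K$-theory is carried to the ordinary pushforward in $K$-homology. This is the standard compatibility of Poincar\'e duality with Gysin maps, and it is the $K$-theoretic counterpart of Proposition \ref{propTransferProp}(2) together with the homological analogue used for the preceding theorem.

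The main obstacle is establishing this last compatibility integrally rather than merely rationally. Two points require care: first, pinning down that the module-theoretic transfer of \cite{MR3073917} employed in Theorem \ref{transferThmForStable} agrees with the $K$-homological Gysin map under duality; and second, tracking the degree shift by $d$ and the spin$^c$ orientation data so that the vertical isomorphisms are genuinely natural in $g$. Once the square is known to commute, passing to the stationary inductive limit is immediate and yields the claimed isomorphism $K_*(C^*(G^s(P))) \cong \lim (K_{*-d}(Y), g_*)$.
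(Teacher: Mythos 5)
Your proposal is correct and takes essentially the same approach as the paper: the paper omits this proof, stating it is ``very similar'' to that of Theorem \ref{spincPutnamQuestion}, and your argument is exactly that analogue---transport the stationary inductive limit from Theorem \ref{KHstable} across spin$^c$ Poincar\'e duality using the commuting square that intertwines $t_{{\rm K-theory}}$ with $g_*$. Your remark that no appeal to the topology of $X$ (i.e., to Theorem \ref{CechX}) is needed on the stable side also matches the paper's discussion preceding the theorem.
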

The proof of the previous two theorems are very similar to the proofs of Theorems \ref{orientablePutnamQuestion} and \ref{spincPutnamQuestion} and therefore are omitted.

\subsection{Torsion in Putnam's homology}
At present there is little known about the range of Putnam's homology theory. The following is known: Putnam's homology is non-trivial for only finitely many indices, has finite rank, often is not finitely generated, and can contain torsion. In this section we explore the torsion subgroup.
\begin{theorem} \label{SameTorsion}
Suppose $Y$ is a flat manifold. Then there exists an expanding endomorphism $g: Y \rightarrow Y$ such that
\[
T(H_*(Y)) \cong T(H_*(G^u(P))) \hbox{ and } T(H^*(Y)) \cong T(H_* (G^s(P)))
\]
where
\begin{enumerate}
\item $(X, \varphi)$ is the Smale space associated to $(Y,g)$ and
\item $T(G)$ denotes the torsion subgroup of an abelian group $G$.
\end{enumerate}
\end{theorem}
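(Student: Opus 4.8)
The plan is to reduce the statement to two ingredients and then assemble them. The first ingredient is a good choice of $g$: I would arrange for the transfer maps $t_{\mathrm{cohomology}}$ and $t_{\mathrm{homology}}$ to restrict to isomorphisms of the (finite) torsion subgroups $T(H^*(Y))$ and $T(H_*(Y))$. The second ingredient is a general fact about stationary inductive limits: if $\alpha\colon H\to H$ restricts to an isomorphism of $T(H)$, then the canonical map $\iota_0\colon T(H)\to T(\lim(H,\alpha))$ is an isomorphism. Granting both, the theorem is immediate from the identifications $H_*(G^s(P))\cong \lim(H^*(Y),t_{\mathrm{cohomology}})$ and $H_*(G^u(P))\cong \lim(H_*(Y),t_{\mathrm{homology}})$ of Theorems \ref{KHstable} and \ref{KHunstable}, giving $T(H_*(G^s(P)))\cong T(H^*(Y))$ and $T(H_*(G^u(P)))\cong T(H_*(Y))$.

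For the choice of $g$, I would exploit Proposition \ref{flatManTorsionHomology}: every element of $T(H_*(Y))$ and $T(H^*(Y))$ has order dividing $|F|$, so multiplication by any integer $n$ with $\gcd(n,|F|)=1$ is an automorphism of these torsion subgroups. It therefore suffices to produce an expanding endomorphism that is an $n$-fold cover with $n$ coprime to $|F|$. The natural candidate is the map induced by multiplication by an integer $m\equiv 1 \pmod{|F|}$ with $m\ge 2$ on the universal cover $\R^d$: after translating the origin one may conjugate the Bieberbach group $\pi_1(Y)$ into $\tfrac{1}{|F|}\Z^d\rtimes F$ (using that $H^2(F;\Z^d)$ is $|F|$-torsion and $H^1(F;\R^d)=0$), so that every translation part $a$ of an element of $\pi_1(Y)$ satisfies $|F|\,a\in\Z^d$. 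Then $(m-1)a\in\Z^d$, the map $x\mapsto mx$ descends to an expanding endomorphism $g$ of degree $m^d$, and $\gcd(m^d,|F|)=1$. (Existence of some expanding endomorphism is guaranteed in general by \cite{EpSh}; here I only need one of controlled degree.)

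With such a $g$ fixed, the transfer maps are isomorphisms on torsion. By Proposition \ref{propTransferProp}(2), $t_{\mathrm{cohomology}}\circ g^*=n\cdot\mathrm{id}$ on $H^*(Y)$; restricting to $T=T(H^*(Y))$, which both maps preserve, and using that $n\cdot\mathrm{id}$ is an automorphism of $T$, one sees $t_{\mathrm{cohomology}}|_T$ is surjective, hence (as $T$ is finite) an isomorphism. Dually, $g_*\circ t_{\mathrm{homology}}=n\cdot\mathrm{id}$ on $H_*(Y)$ forces $t_{\mathrm{homology}}$ to be injective on $T(H_*(Y))$, hence an isomorphism. For the inductive-limit lemma I would argue directly with representing pairs $(h,k)$: the map $\iota_0$ is injective because $\alpha|_{T(H)}$ is injective, and surjective because any torsion class is represented by some $(h,k)$ with $\alpha^N(h)\in T(H)$ for some $N$, and $\alpha^N(h)$ lies in the image of the isomorphism $\alpha^{k+N}|_{T(H)}$, hence the class lies in $\iota_0(T(H))$.

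The main obstacle I expect is the existence part, namely producing an expanding endomorphism whose degree is coprime to $|F|$, since \cite{EpSh} supplies existence without any control on the degree. The coordinate normalization of $\pi_1(Y)$—equivalently the vanishing $H^1(F;\R^d)=0$ together with the $|F|$-torsion of $H^2(F;\Z^d)$—is the technical heart of the argument. Everything downstream, namely the transfer computation and the identification of torsion in the inductive limit, is then formal and depends only on the finiteness of the torsion subgroups and the relations recorded in Proposition \ref{propTransferProp}.
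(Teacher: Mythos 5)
Your proposal is correct and follows essentially the same route as the paper: choose $g$ so that multiplication by its covering degree is an automorphism of the finite torsion subgroups (Proposition \ref{flatManTorsionHomology}), deduce from Proposition \ref{propTransferProp}(2) and finiteness that the transfer maps restrict to isomorphisms on torsion, and conclude via the inductive limits of Theorems \ref{KHstable} and \ref{KHunstable}. The one step you flag as the main obstacle---producing an expanding endomorphism of controlled degree---is exactly what the paper's citation already supplies: the theorem on page 140 of \cite{EpSh} (see also \cite[Theorem 4.4]{DeeHKConj}) gives an expanding endomorphism of degree $(|F|+1)^d$, which is the Bieberbach-group construction you sketch with $m=|F|+1$.
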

\begin{proof}
The proof is similar to the proof of \cite[Theorem 4.4]{DeeHKConj}. By the main result of \cite{EpSh} (see the theorem on page 140 of \cite{EpSh} or \cite[Theorem 4.4]{DeeHKConj} for details), there exists an expanding endomorphism $g: Y\rightarrow Y$ such that $g$ is an $n$-fold cover with $n=(|F|+1)^d$ where 
\begin{enumerate}
\item $d$ is the dimension of $Y$,
\item $F$ is the holonomy group of $\pi_1(Y)$, see Subsection \ref{ExpEndSec}, and
\item $|F|$ is the order of the finite group $F$.
\end{enumerate}
By Theorem \ref{propTransferProp}, we have that
\[
g_* \circ t_{homology} = \hbox{ multiplication by }(|F|+1)^d,
\]
and likewise
\[
t_{cohomology} \circ g^*=\hbox{ multiplication by }(|F|+1)^d.
\]
By Proposition \ref{propTransferProp}, multiplication by $(|F|+1)^d$ is the identity on the torsion subgroups of (co)homology of $Y$. Since the torsion subgroups of (co)homology of $Y$ are finite, it follows that $t_{homology}$ and $t_{cohomology}$ are isomorphisms when restricted to the torsion part (co)homology. The result now follows from the inductive limits in Theorems \ref{KHstable} and \ref{KHunstable}.
\end{proof}
In other words, the previous theorem states that whatever torsion occurs in the homology/cohomology of a flat manifold also occurs in Putnam's homology. This result can often be used to give positive answers to questions about Putnam's homology theory via results about flat manifolds. For example, we have the following:
\begin{theorem}
Suppose $p$ is a prime. Then there exists a Smale space $(X, \varphi)$ such that $T(H^u_1(X, \varphi))\cong \Z/p\Z$.
\end{theorem}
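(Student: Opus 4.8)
The plan is to reduce this to the torsion-matching result of Theorem \ref{SameTorsion}, so the entire problem becomes one of finding a flat manifold whose first homology (or cohomology) has torsion subgroup exactly $\Z/p\Z$. Recall from Theorem \ref{SameTorsion} that for a suitable expanding endomorphism $g: Y \rightarrow Y$ we obtain $T(H^u_1(X, \varphi)) \cong T(H_1(G^u(P))) \cong T(H_1(Y))$. Thus it suffices to exhibit, for each prime $p$, a flat manifold $Y$ with $T(H_1(Y)) \cong \Z/p\Z$, and then apply Theorem \ref{SameTorsion} together with the existence of an expanding endomorphism (guaranteed by \cite{EpSh}) to produce the desired Smale space.

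First I would recall that $H_1(Y) \cong \pi_1(Y)^{\rm ab}$, so the torsion in $H_1(Y)$ is the torsion subgroup of the abelianization of the Bieberbach group $\pi_1(Y)$. The task is therefore group-theoretic: construct a $d$-dimensional Bieberbach group $\Gamma$ fitting in $0 \rightarrow \Z^d \rightarrow \Gamma \rightarrow F \rightarrow 0$ with $\Gamma$ torsion-free and $T(\Gamma^{\rm ab}) \cong \Z/p\Z$. By Proposition \ref{flatManTorsionHomology} any torsion in $H_*(Y)$ has order dividing $|F|$, so I must choose the holonomy group $F$ to have order divisible by $p$; the natural choice is $F = \Z/p\Z$ acting on a lattice in such a way that the cokernel of the relevant coinvariants computation yields precisely one factor of $\Z/p\Z$.

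The concrete construction I would use is a mapping-torus / fibered flat manifold: take the holonomy $F = \Z/p\Z$ acting on $\Z^{p-1}$ (or on a suitable lattice realizing a faithful integral representation with no nonzero fixed vectors, e.g. the root lattice of type $A_{p-1}$ with the cyclic permutation action), and form the semidirect product $\Gamma = \Z^{p-1} \rtimes \Z/p\Z$ pulled back appropriately to be torsion-free. The abelianization of such a split extension is computed as $\Z^{d} / (\sigma - 1)\Z^{d}$ in the lattice directions together with the contribution from $F$, and for the cyclic action of order $p$ on the augmentation ideal one gets exactly a single $\Z/p\Z$ summand of torsion. The most efficient route is probably to cite the known classification results for low-dimensional flat manifolds or the general existence results in \cite{MR862114} that realize arbitrary finite holonomy, and then pin down the homology via the integral representation theory of $\Z/p\Z$.

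The hard part will be ensuring two things simultaneously: that the chosen Bieberbach group is genuinely torsion-free (so that $Y$ is an honest flat manifold and not merely an orbifold), and that the torsion subgroup of $H_1(Y)$ is exactly $\Z/p\Z$ rather than a larger $p$-group or a group with extra prime factors. Torsion-freeness of a semidirect product $\Z^m \rtimes F$ requires a careful choice of the extension cocycle (a split extension of this form is torsion-free precisely when the holonomy acts without nonzero fixed points on no nontrivial finite quotient in the wrong way); I would handle this by taking the action of $\Z/p\Z$ on the $A_{p-1}$ root lattice, for which the semidirect product is known to be a Bieberbach group, and then compute $H_1$ directly from the coinvariants $(\Z^{p-1})_F$ plus $F^{\rm ab} = \Z/p\Z$, verifying that the cross terms in the Lyndon--Hochschild--Serre five-term exact sequence contribute exactly one copy of $\Z/p\Z$ of torsion. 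Once the manifold $Y$ with $T(H_1(Y)) \cong \Z/p\Z$ is in hand, the Smale space is produced immediately by invoking \cite{EpSh} and Theorem \ref{SameTorsion}.
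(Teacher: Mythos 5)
Your first step --- reducing via Theorem \ref{SameTorsion} (together with \cite{EpSh}) to finding a flat manifold $Y$ with $T(H_1(Y))\cong \Z/p\Z$ --- is exactly the paper's argument; the paper then finishes in one line by citing \cite[Theorem 3.10(i)]{MR170305} for the existence of such a $Y$. Your explicit construction, however, has a genuine gap. A split semidirect product $\Z^{p-1}\rtimes \Z/p\Z$ is \emph{never} torsion-free: the splitting embeds $\Z/p\Z$ as a finite subgroup, so this group is not a Bieberbach group and the corresponding quotient is an orbifold, not a flat manifold. Your claim that the semidirect product for the $A_{p-1}$-root-lattice action ``is known to be a Bieberbach group'' is false. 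Nor can this be repaired by choosing a nontrivial extension cocycle with that lattice: the $A_{p-1}$ root lattice is isomorphic as a $\Z[\Z/p\Z]$-module to $\Z[\zeta_p]$ with the action of multiplication by $\zeta_p$, which has no nonzero invariants, so $H^2(\Z/p\Z;\Z[\zeta_p])\cong \Z[\zeta_p]^{\Z/p\Z}/N\Z[\zeta_p]=0$ (here $N$ is the norm element), and hence every extension of $\Z/p\Z$ by this lattice splits. With this module there is no torsion-free extension at all. (Also, even for your split group the torsion of the abelianization would be $(\Z/p\Z)^2$ --- one copy from the coinvariants of the lattice and one from $F^{\mathrm{ab}}$ --- not the single copy you assert.)

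The fix is to enlarge the lattice by a trivial summand, i.e.\ to take the mapping torus honestly. Let $\sigma$ be the order-$p$ isometry of the torus $\R^{p-1}/\Z[\zeta_p]$ induced by multiplication by $\zeta_p$, and let $Y$ be its mapping torus; then $Y$ is a quotient of $\R^p$ by a group of isometries acting freely and properly discontinuously, and $\pi_1(Y)\cong \Z[\zeta_p]\rtimes_{\sigma}\Z$ (a semidirect product with $\Z$, not with $\Z/p\Z$), which is torsion-free. Its translation lattice is $\Z[\zeta_p]\oplus p\Z\cong \Z^p$, so $Y$ is a $p$-dimensional flat manifold with holonomy $\Z/p\Z$, realized as the \emph{nonsplit} extension of $\Z/p\Z$ by $\Z[\zeta_p]\oplus\Z$ whose class generates $H^2(\Z/p\Z;\Z)\cong\Z/p\Z$. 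Abelianizing kills $(\zeta_p-1)\Z[\zeta_p]$, so
\[
H_1(Y)\cong \Z[\zeta_p]/(\zeta_p-1)\Z[\zeta_p]\oplus\Z\cong \Z/p\Z\oplus\Z ,
\]
and $T(H_1(Y))\cong\Z/p\Z$ as required (for $p=3$ this is the manifold $O^3_3$ in the paper's table). With this replacement --- or simply with the paper's citation of Charlap --- your argument goes through.
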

\begin{proof}
Theorem \ref{SameTorsion} reduces the proof to showing that there exists a flat manifold $Y$ with $T(H_1(Y)) \cong \Z/p\Z$. The existence of the required flat manifold follows from \cite[Theorem 3.10(i)]{MR170305}. 
\end{proof}

\subsection{Further computations}
For each $d\in \N$ there are finitely many flat manifolds with dimension $d$. For $d=3$, there are ten, and a complete list with their homologies is as follows:
\newpage
\begin{center}
{\bf Orientable flat $3$-manifolds}
\vspace{0.3cm}

\begin{tabular}{c|c|c|c|c}
Manifold & $H_1$ & $H_2$ & $H_3$ & Holonomy\\
\hline
$O^3_1$ & $\Z^3$ & $\Z^3$ & $\Z$ &  $\{0\}$ \\
\hline
$O^3_2$ & $\Z\times(\Z_2)^2$ & $\Z$ & $\Z$  &  $\Z_2$\\
\hline
$O^3_3$ & $\Z\times\Z_3$ & $\Z$ &  $\Z$ & $\Z_3$\\
\hline
$O^3_4$ & $\Z\times\Z_2$ & $\Z$ & $\Z$ & $\Z_4$\\
\hline
$O^3_5$ & $\Z$ & $\Z$ & $\Z$ & $\Z_6$\\
\hline
$O^3_6$ & $(\Z_4)^2$ & $\{ 0 \}$ & $\Z$ & $(\Z_2)^2$\\
\end{tabular}
\vspace{0.3cm}

{\bf Nonorientable flat $3$-manifolds}
\vspace{0.3cm}

\begin{tabular}{c|c|c|c|c}
Manifold & $H_1$ & $H_2$ & $H_3$ & Holonomy\\
\hline
$N^3_1$ & $\Z^2\times\Z_2$ & $\Z^2\times \Z_2$ & $\{0\}$ & $\Z_2$ \\
\hline
$N^3_2$ & $\Z^2$ & $\Z^2 \times \Z_2$ & $\{0 \}$ & $\Z_2$\\
\hline
$N^3_3$ & $\Z\times(\Z_2)^2$ & $\Z \times \Z_2$ & $\{0 \}$ & $(\Z_2)^2$\\
\hline
$N^3_4$ & $\Z\times\Z_4$ & $\Z \times \Z_2$ & $\{ 0 \}$ & $(\Z_2)^2$\\
\end{tabular}
\vspace{0.3cm}
\end{center}
The reader can find this list in \cite{MR0217740} (where we note that Wolf only lists the $H_1$-group but the other homology groups can be computed from it and the other information in the list from \cite{MR0217740}). Using the list, one can compute the Putnam homology and $K$-theory for many explicit examples. First, one can compute the cohomology groups; either using Poincar\'e duality when the flat manifold is orientable or the universal coefficient theorem when the flat manifold is non-orientable. Then, using the fact that we are working in dimension three, the cohomology/homology determine the $K$-theory/$K$-homology. Namely, 
\[
K^*(Y) \cong \bigoplus_{*+2i} H^*(Y) \hbox{ and } K_*(Y) \cong \bigoplus_{*+2i} H_*(Y),
\]
see for example \cite{MR1951251} for details. Next, the relevant transfer maps can be computed, usually using Proposition \ref{propTransferProp} Part (2). Finally, the relevant inductive limits can be determined.

There are lists of flat manifolds in other dimensions and lists when one assumes a specific holonomy group, see the results in \cite{MR170305, MR189064} in the case when the holonomy is cyclic of prime order. The process discussed in the case of three dimensional flat manifolds can be generalized and applied to such lists. As a sample we discuss the case of Hantzsche--Wendt manifolds. The reader can find work on Hantzche-Wendt manifolds in for example \cite{MR2581917}.

\begin{definition}
A Hantzsche--Wendt manifold is an orientable flat manifold of dimension $d$ with holonomy group $(\Z/2\Z)^{d-1}$.
\end{definition}
\begin{example}
In dimension three, there is one Hantzsche--Wendt manifold. It is $O^3_6$ in the list above.
\end{example}
Some important facts we will use are the following.
\begin{enumerate}
\item If $Y$ is a  Hantzsche--Wendt manifold then its dimension is odd and for any $d\ge 3$ odd there exists a Hantzsche--Wendt manifold. 
\item The rational homology of a $d$ dimensional Hantzsche--Wendt manifold is the same as the $d$-sphere. In particular, if $Y$ is a $d$ dimensional Hantzsche--Wendt manifold, then it satisfies $H_0(Y) \cong H_d(Y) \cong \Z$ and all other homology groups are pure torsion. 
\end{enumerate}

\begin{theorem}
Suppose $d\ge 3$ is an odd integer, $Y$ is a $d$ dimensional Hantzsche--Wendt manifold, $g: Y\rightarrow Y$ is an  expanding endomorphism as in Theorem \ref{SameTorsion}, and $(X, \varphi)$ is the Smale space associated to $(Y,g)$. Then 
\[
H_*(G^u(P)) \cong \left\{ \begin{array}{ll} \Z \left[ \frac{1}{(2^{d-1}+1)^d} \right] & *=0 \\ H_n(Y) & \hbox{ else} \end{array} \right.
\]
and
\[
H_*(G^s(P)) \cong \left\{ \begin{array}{ll} \Z \left[ \frac{1}{(2^{d-1}+1)^d} \right] & *=0 \\ H^n(Y) & \hbox{ else.} \end{array} \right.
\]
\end{theorem}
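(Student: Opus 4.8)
The plan is to feed the structure of a Hantzsche--Wendt manifold into the inductive-limit descriptions of Theorems \ref{KHunstable} and \ref{KHstable}, and to reuse the torsion analysis from Theorem \ref{SameTorsion}. First I would record the numerics: the holonomy is $(\Z/2\Z)^{d-1}$, so $|F| = 2^{d-1}$ and the endomorphism $g$ furnished by Theorem \ref{SameTorsion} is an $n$-fold cover with $n = (|F|+1)^d = (2^{d-1}+1)^d$. By Theorem \ref{KHunstable} we have $H_*(G^u(P)) \cong \lim(H_*(Y), t_{\mathrm{homology}})$, and by Theorem \ref{KHstable} together with Poincar\'e duality (available since $Y$ is orientable) we have $H_*(G^s(P)) \cong \lim(H^*(Y), t_{\mathrm{cohomology}})$, where $H^*(Y) \cong H_{d-*}(Y)$.

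Next I would split the analysis according to the stated fact that a $d$-dimensional Hantzsche--Wendt manifold has the rational homology of $S^d$, so that $H_0(Y) \cong H_d(Y) \cong \Z$ and every intermediate homology group is pure (finite) torsion. Degree $0$ and degree $d$ are then immediate from Theorems \ref{mainResultZeroDegree} and \ref{mainResultTopDegree}: in degree $0$ the induced map is the identity, the transfer is multiplication by $n$, and the stationary limit is $\Z\left[\frac{1}{(2^{d-1}+1)^d}\right]$; in degree $d$ the transfer is an isomorphism of $\Z$, so the limit is $\Z \cong H_d(Y)$. The identical reasoning on the stable side handles $H^0(Y)$ and $H^d(Y)$.

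For the intermediate degrees $1 \le * \le d-1$, where $H_*(Y)$ (hence, by duality, $H^*(Y)$) is a finite torsion group, I would invoke the argument inside the proof of Theorem \ref{SameTorsion}: for this particular $g$ the transfer is an isomorphism on torsion subgroups. Since here the entire group is torsion, the connecting map is an automorphism of a finite abelian group, and the stationary direct limit of a finite group under an automorphism is the group itself. This gives $H_*(G^u(P)) \cong H_*(Y)$ and $H_*(G^s(P)) \cong H^*(Y)$ in these degrees, and assembling the three ranges yields the two displayed isomorphisms. The only point requiring care is this middle range: one must note that the pure-torsion hypothesis upgrades the torsion isomorphism of Theorem \ref{SameTorsion} to an isomorphism of the full connecting map and simultaneously rules out any free summand in the limit, so that no separate rational computation is needed.
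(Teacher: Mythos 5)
Your proposal is correct and takes essentially the same route as the paper, whose proof is a one-line citation of exactly the ingredients you use: the fact that a Hantzsche--Wendt manifold has $H_0 \cong H_d \cong \Z$ with all intermediate (co)homology pure torsion, Theorem \ref{SameTorsion} for the middle degrees, and Theorems \ref{mainResultZeroDegree} and \ref{mainResultTopDegree} for degrees $0$ and $d$. You have merely made explicit the details the paper leaves implicit (in particular that a stationary limit of a finite abelian group along an automorphism is the group itself), and your invocation of Poincar\'e duality on the stable side is harmless but unnecessary, since Theorem \ref{KHstable} already produces the limit over $H^*(Y)$ directly.
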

\begin{proof}
This result follows directly from the comment on the homology of a Hantzsche--Wendt manifold stated just before the theorem, Theorem \ref{SameTorsion}, Theorem \ref{mainResultZeroDegree}, and Theorem \ref{mainResultTopDegree}.
\end{proof}

\begin{theorem}
Suppose $d\ge 3$ is an odd integer, $Y$ is a $d$ dimensional Hantzsche--Wendt manifold, $g: Y\rightarrow Y$ is an expanding endomorphism that is an $n$-fold cover, and $(X, \varphi)$ is the Smale space associated to $(Y,g)$. Then, for each $k\in \N$,
\[
|{\rm Per}_k(X, \varphi)| = n^k-1 \hbox{ or } n^k+1.
\]
\end{theorem}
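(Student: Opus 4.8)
The plan is to reduce the count of $\varphi$-periodic points to a count of fixed points of an iterate of $g$ on $Y$, and then to evaluate that count with the Lefschetz fixed point theorem, exploiting that a Hantzsche--Wendt manifold has the rational homology of a sphere.

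First I would identify the periodic points with fixed points of $g^k$. For a point $x=(x_0,x_1,x_2,\ldots)\in X$, the formula $\varphi(x_0,x_1,\ldots)=(g(x_0),x_0,x_1,\ldots)$ shows that the zeroth coordinate of $\varphi^k(x)$ is $g^k(x_0)$, so $\varphi^k(x)=x$ forces $g^k(x_0)=x_0$. Conversely, the solenoid relation $g(x_{i+1})=x_i$ together with $\varphi^k(x)=x$ forces $x_j=g^{k-j}(x_0)$ for $0\le j\le k$, extended periodically by $x_{j+k}=x_j$; hence a solution $p\in Y$ of $g^k(p)=p$ determines a unique point of $X$ fixed by $\varphi^k$. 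This gives a bijection ${\rm Fix}(\varphi^k)\cong {\rm Fix}(g^k)$, so that $|{\rm Per}_k(X,\varphi)|=|{\rm Fix}(g^k)|$.

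Second, I would count ${\rm Fix}(g^k)$ via the Lefschetz fixed point theorem. Since $g$ is expanding, so is $g^k$, and on a flat manifold $g$ is (topologically conjugate to) an affine map whose linear part $A$ has all eigenvalues of modulus $>1$; thus at every fixed point of $g^k$ the derivative is $A^k$, the matrix $I-A^k$ is invertible because no eigenvalue of $A^k$ equals $1$, and every fixed point is nondegenerate with the common index ${\rm sign}\det(I-A^k)$. Because all indices share this sign, the signed Lefschetz count becomes a genuine count: $|{\rm Fix}(g^k)|=|L(g^k)|$, where $L(g^k)=\sum_i(-1)^i\,{\rm tr}\bigl(g^k_*\mid H_i(Y;\Q)\bigr)$. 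I would then evaluate $L(g^k)$ using that a Hantzsche--Wendt manifold is orientable of odd dimension $d$ with the rational homology of $S^d$, so only $H_0$ and $H_d$ contribute: $g^k$ acts as the identity on $H_0(Y;\Q)$ and as multiplication by $(\deg g)^k$ on $H_d(Y;\Q)\cong\Q$, while $\deg g=\pm n$ since $g$ is an $n$-fold self-cover of a closed orientable manifold. As $d$ is odd, $L(g^k)=1-(\deg g)^k=1-(\pm n)^k$, and taking absolute values yields $n^k-1$ when $(\pm n)^k>0$ and $n^k+1$ when $(\pm n)^k<0$, which is exactly the claim.

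The step I expect to be most delicate is the equal-index assertion in the previous paragraph: passing from ``all fixed points are nondegenerate'' to ``all indices have the same sign,'' so that $|{\rm Fix}(g^k)|$ equals $|L(g^k)|$ rather than merely agreeing with it up to sign. This rests on the structure of expanding maps of flat manifolds, namely the existence of an affine (infra-nil) model and the resulting constancy of the derivative up to conjugacy. I would settle it either by citing Shub's classification of expanding maps, or by invoking a topological conjugacy to the affine model and noting that topological conjugacy preserves $|{\rm Fix}(g^k)|$; the remaining homological computation is then routine.
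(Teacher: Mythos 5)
Your proposal is essentially correct, but it follows a genuinely different route from the paper, and one step of it needs a repair. The paper never counts fixed points of $g$ on $Y$ at all: it computes $H_*(G^u(P))\otimes\Q$ (which for a Hantzsche--Wendt manifold is $\Q$ in degrees $0$ and $d$ and zero otherwise), identifies the maps induced by the dynamics as multiplication by $n$ in degree $0$ and $\pm{\rm id}$ in degree $d$ via Theorems \ref{mainResultZeroDegree} and \ref{mainResultTopDegree}, and then applies Putnam's Lefschetz theorem for Smale spaces \cite[Theorem 6.1.1]{PutHom}, which outputs $|{\rm Per}_k(X,\varphi)|=|n^k-(\pm1)^k|$ directly. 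You instead transport the count to the base through the bijection ${\rm Fix}(\varphi^k)\cong{\rm Fix}(g^k)$ (which is correct and standard for solenoids) and run the classical Lefschetz fixed point theorem on $Y$. Your route is more self-contained --- it needs no Putnam homology at all --- at the price of a smooth fixed-point-index analysis; the paper's route avoids indices entirely but leans on the structural theorems it proved earlier.

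The one genuine gap is the equal-index step --- the step you yourself flagged as delicate. Your nondegeneracy claim is fine: at any fixed point $p$ of $g^k$ the expanding property forces every eigenvalue of $Tg^k_p$ to have modulus $>1$, and then ${\rm index}(p)={\rm sign}\det(I-Tg^k_p)=(-1)^d\,{\rm sign}\det(Tg^k_p)$, using $\det(I-B)=(-1)^d\det(B)\det(I-B^{-1})$ together with $\det(I-B^{-1})>0$ when all eigenvalues of $B$ lie outside the unit circle. The problem is the identification of the derivative with $A^k$: on a flat manifold $Y=\R^d/\Gamma$ with nontrivial holonomy this is false. In the affine model, a fixed point $[\tilde p]$ of the $k$-th iterate satisfies $A^k\tilde p=\gamma\cdot\tilde p$ for some deck transformation $\gamma\in\Gamma$, and in local charts the derivative there is $R\,A^k$, where $R$ is the rotational part of $\gamma^{-1}$; hence the index is $(-1)^d\det(R)\,({\rm sign}\det A)^k$, which is constant over the fixed point set precisely when every $\det R_\gamma=1$, i.e.\ when $Y$ is orientable. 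The failure is real, not pedantic: for the affine expanding map $(x,y)\mapsto(3x,3y)$ on the Klein bottle (the affine model of the cover in Figure \ref{dia9Klein}) one computes $|{\rm Fix}(g^k)|=9^k-3^k$, whereas $|L(g^k)|=3^k-1$; fixed points of index $+1$ and $-1$ both occur, so your argument as written would ``prove'' a false statement for non-orientable flat manifolds. The repair costs nothing in your setting, since Hantzsche--Wendt manifolds are orientable by definition: either note that $\det R_\gamma=1$ for all $\gamma\in\Gamma$, or skip the affine model altogether --- $g$ is a $C^1$ local diffeomorphism of a closed connected orientable manifold, so ${\rm sign}\det(Tg)$ is constant on $Y$, and therefore every index equals $(-1)^d({\rm sign}\det Tg)^k$. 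With the common sign established, $|{\rm Fix}(g^k)|=|L(g^k)|=|1-(\pm n)^k|$, and the rest of your computation completes the proof.
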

\begin{proof}
By the previous theorem, 
\[
H_*(G^u(P))\otimes \Q \cong \left\{ \begin{array}{ll} \Q & *=0 \hbox{ or }d \\ \{ 0 \} & \hbox{ else.} \end{array} \right.
\]
and by Theorems \ref{mainResultZeroDegree} and \ref{mainResultTopDegree}, the induced maps are multiplication by $n$ in degree zero and the identity or its negation in degree $d$. The result then follows from Putnam's Lefschetz theorem \cite[Theorem 6.1.1]{PutHom}.
\end{proof}

\end{document}